\theoremstyle:=definition,remark,plain\do{%
 \expandafter\g@addto@macro\csname th@\theoremstyle\endcsname{%
 \addtolength\thm@preskip\parskip
 }%
 }
\declaretheorem[name=Theorem,numberwithin=section]{thm}
\declaretheorem[name=Proposition,numberlike=thm]{prop}
\declaretheorem[name=Lemma,numberlike=thm]{lemma}
\declaretheorem[name=Corollary,numberlike=thm]{cor}
\declaretheorem[name=Definition,style=definition,qed=$\blacktriangle$,numberlike=thm]{defn}
\declaretheorem[name=Remark,style=definition,qed=$\blacktriangle$,numberlike=thm]{rmk}
\newcommand{\VF}{\mathfrak{X}}
\newcommand{\w}{\wedge}
\newcommand{\U}[1]{\mathrm{U}(#1)}
\newcommand{\SU}[1]{\mathrm{SU}(#1)}
\newcommand{\Sp}[1]{\mathrm{Sp}(#1)}
\newcommand{\R}{\mathbb R}
\newcommand{\C}{\mathbb C}
\newcommand{\del}{\partial}
\newcommand{\delbar}{\overline{\partial}}
\newcommand{\ol}{\overline}
\newcommand{\mubar}{\overline{\mu}}
\newcommand{\br}[2]{\llbracket #1, #2 \rrbracket}
 \newcommand{\bu}{\bullet}
\begin{document}

\title{Hodge theoretic results for nearly K\"ahler manifolds \\ in all dimensions}

\author{\begin{tabular}{l} Michael Albanese${}^1$, \, \tt{michael.albanese@adelaide.edu.au} \\ Spiro Karigiannis${}^2$, \, \tt{karigiannis@uwaterloo.ca} \\ Luc\'ia Mart\'in-Merch\'an${}^3$, \, \tt{lucia.martin.merchan@hu-berlin.de} \\ Aleksandar Milivojevi\'c${}^2$, \, \tt{amilivojevic@uwaterloo.ca} \\ \, \\ ${}^1$\emph{School of Computer and Mathematical Sciences}, \emph{University of Adelaide} \\ ${}^2$\emph{Department of Pure Mathematics}, \emph{University of Waterloo} \\ ${}^3$\emph{Institut f\"ur Mathematik}, \emph{Humboldt Universit\"at zu Berlin} \end{tabular}}

\date{February 5, 2026}

\maketitle

\begin{abstract}
We generalize to nearly K\"ahler manifolds of arbitrary dimensions most of the Hodge-theoretic results for nearly K\"ahler $6$-manifolds that were established by Verbitsky. In particular, for a compact nearly K\"ahler manifold of any dimension, the (appropriately defined) Hodge numbers are related to the Betti numbers in the same way as on a compact K\"ahler manifold. In the $6$-dimensional case, Verbitsky was able to say slightly more using the induced $\SU{3}$ structure. We discuss potential extensions of this to twistor spaces over positive scalar curvature quaternionic-K\"ahler manifolds, which are a particular class of $(4n+2)$-dimensional nearly K\"ahler manifolds equipped with a special $\SU{n} \! \cdot \! \U{1}$ structure.
\end{abstract}

\tableofcontents

\section{Introduction} \label{sec:intro}

Among the various classes of almost Hermitian manifolds, those which are nearly K\"ahler play a special role. They are defined by the condition that $\nabla \omega$ is totally skew-symmetric, which, in the Gray--Hervella classification~\cite{GH}, corresponds to only one particular irreducible component in the decomposition of the intrinsic torsion being nonzero.

In dimension six, strict nearly K\"ahler manifolds (those with $\nabla \omega \neq 0$) are particularly important but rare. To date, only four manifolds are known to admit such structures: $S^6$, $S^3 \times S^3$, $\mathbb{CP}^3$ and $F_{1,2}=\mathrm{SU}(3)/T^2$. Non-homogeneous nearly K\"ahler metrics have been constructed only on the first two by Foscolo--Haskins~\cite{FoscoloHaskins}. In the latter two, the nearly K\"ahler structure arises by viewing them as the twistor space of the anti-self-dual manifolds $S^4$ and $\mathbb{CP}^2$, respectively.

Strict nearly K\"ahler $6$-manifolds admit a compatible $\mathrm{SU}(3)$ structure encoded by a nowhere vanishing $(3,0)$-form $\Theta$ satisfying 
\begin{equation} \label{eq:NK6}
d\omega= 3 \lambda \mathrm{Re}(\Theta), \quad d\mathrm{Im}(\Theta)=-2\lambda \omega^2, \qquad \lambda \in \R\backslash \{0\},
\end{equation}
which leads to additional geometric properties. For instance, they admit a unit-length real Killing spinor and hence, an Einstein metric. (See~\cite{Russo}, for example, for some of these details.) Another consequence is that cones over them carry a parallel $\mathrm{G}_2$ structure, and thus they play an essential role in the study~\cite{K-desings, KL-conifolds} of torsion-free $\mathrm{G}_2$ conifolds.

Higher dimensional nearly K\"ahler manifolds are also linked to special holonomy, as these appear as one of the canonical structures on the twistor space of a positive scalar curvature quaternionic-K\"ahler manifold. See~\cite[Theorem 14.3.9]{BG} for a precise statement, or~\cite{AGI, Nagy} for details.
 
Nearly K\"ahler manifolds carry a metric-compatible Hermitian connection $\nabla^c$ with totally skew-symmetric torsion, given by $T(X,Y)=(\nabla_XJ)JY$. Belgun--Moroianu~\cite[Lemma 2.4]{BelgunMoroianu} showed that $\nabla^cT=0$. This property imposes quite strong restrictions on either the holonomy of $\nabla^c$ or the symmetries of the manifold~\cite{Cleyton}. Nagy~\cite{Nagy} exploited this fact to establish a classification of strict, complete, and simply connected nearly K\"ahler manifolds. Any such manifold is a Riemannian product of three types of building blocks: twistor spaces of positive scalar curvature quaternionic-K\"ahler manifolds, general six-dimensional strict nearly K\"ahler manifolds, and certain symmetric examples.

The present paper builds upon earlier work of Verbitsky on strict nearly K\"ahler $6$-manifolds~\cite{Verbitsky}, where exploiting the $\mathrm{SU}(3)$ structure, he demonstrated a generalization of the K\"ahler identities. Using these, he then proved a Hodge decomposition for the cohomology analogous to the K\"ahler case, together with vanishing statements for some corresponding Hodge numbers on strict nearly K\"ahler $6$-manifolds. Our main contribution is to extend these identities and the Hodge decomposition to nearly K\"ahler manifolds in arbitrary dimensions. While there exist more general analogues of the K\"ahler identities for Hermitian manifolds~\cite{FernandezHosmer}, we obtain stronger conclusions in this particular case by working directly with operators on the exterior algebra.

More broadly, this work identifies some intrinsic properties of nearly K\"ahler geometry, independent of Nagy's classification result. The motivation for this arose while attempting to find an intrinsic proof of formality for compact nearly K\"ahler manifolds, which was established in~\cite{AmannTaimanov} using the Nagy classification. The hope is that, as in the compact K\"ahler case, these general nearly K\"ahler identities might eventually be used to establish an analogue of the $\partial \ol{\partial}$-lemma, leading to a direct proof of formality as in the K\"ahler case~\cite{DGMS}.

The paper is structured as follows: after introducing the necessary background in section~\ref{sec:prelim}, we generalize Verbitsky's nearly K\"ahler identities to all dimensions in section~\ref{sec:nk-id}. We then derive some consequences from these in section~\ref{sec:consequences}, and finally establish a Hodge decomposition valid beyond dimension six in section~\ref{sec:hodge}. This allows us to recover Verbitsky's Hodge number vanishing result for strict nearly K\"ahler $6$-manifolds. A summary and discussion are presented in Section~\ref{sec:conclusion}.

\textbf{Acknowledgements.} The authors thank the anonymous referees for their feedback which improved our paper. Author SK is funded by a Discovery Grant from the Natural Sciences and Engineering Research Council of Canada (NSERC). Author LM is funded by the Deutsche Forschungsgemeinschaft (DFG, German Research Foundation) under Germany's Excellence Strategy -- The Berlin Mathematics Research Center MATH+ (EXC-2046/1, EXC-2046/2, project ID: 390685689).

\section{Preliminaries} \label{sec:prelim}

In this section we discuss linear operators on the space of forms and give a review of basic facts about nearly K\"ahler manifolds.

\subsection{Linear operators on the space of forms} \label{subsec:diff-ops}

Let $(M, g)$ be a Riemannian manifold. Given a linear operator $P$ on $\Omega^{\bu}(M)$, we say that it is a \emph{graded} operator of degree $|P|$ if $P$ restricts to $P \colon \Omega^k(M) \to \Omega^{k + |P|}(M)$ for every $k$. Its metric adjoint $P^*$ is a graded operator of degree $-|P|$. For example, the operator $L_{\beta}$ given by left multiplication by a form $\beta \in \Omega^{\ell}(M)$, that is $L_{\beta} \gamma = \beta \w \gamma$, has degree $\ell$, and its metric adjoint $\Lambda_{\beta} = (L_{\beta})^*$ has degree $-\ell$.
 
Given two graded operators $P$, $Q$, their graded commutator is
$$ \br{P}{Q} = P \circ Q - (-1)^{|P||Q|} Q \circ P. $$
Note that the graded commutator satisfies $\br{P}{Q}^* = \br{Q^*}{P^*}$, and the graded Jacobi identity
$$ \br{P}{\br{Q}{R}} = \br{\br{P}{Q}}{R} + (-1)^{|P||Q|} \br{Q}{\br{P}{R}}. $$ 
If either of the operators $P,Q$ has even degree, then the graded commutator coincides with the classical commutator, and we will occasionally use the classical notation $[P,Q]$ in this case.

A \emph{derivation} is a graded operator $D$ that satisfies the graded Leibniz rule:
$$ D(\alpha \wedge \beta) = (D \alpha) \wedge \beta + (-1)^{|D| |\alpha|} \alpha \wedge (D \beta). $$
This can be expressed concisely as $\br{D}{L_\alpha} = L_{D\alpha}$ for all $\alpha$.

In addition, we have the notion of the \emph{algebraic order} of a linear operator on $\Omega^{\bu}(M)$, which is defined recursively as follows. The set of operators with algebraic order $0$ is
$$ \mathcal{A}^0 = \{ L_{\beta} : \beta \in \Omega^{\bu}(M) \}. $$
For $r > 0$, the set of linear operators on $\Omega^{\bu}(M)$ with algebraic order $\leq r$ is
$$ \mathcal{A}^r = \{ P : \br{P}{L_\beta} \in \mathcal{A}^{r-1} \, \, \text{for all $\beta \in \Omega^{\bu}(M)$} \}. $$
It follows that
\begin{equation} \label{eqn:order-bracket}
\br{\mathcal{A}^r}{\mathcal{A}^s} \subseteq \mathcal{A}^{r+s-1}.
\end{equation}
In particular, derivations of degree one have algebraic order $\leq 1$. We shall make use of the following results, whose proofs can be found in~\cite{Verbitsky}.

\begin{prop} \label{lem:order-Lambda}\cite[Proposition 2.7]{Verbitsky}
If $\beta \in \Omega^k(M)$, then $\Lambda_{\beta} = (L_{\beta})^*$ has algebraic order $\leq k$.
\end{prop}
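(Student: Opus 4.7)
My plan is to induct on $k = |\beta|$, using an auxiliary \emph{composition lemma}: if $P \in \mathcal{A}^r$ and $Q \in \mathcal{A}^s$, then $P \circ Q \in \mathcal{A}^{r+s}$. This is the crucial algebraic input, strengthening~\eqref{eqn:order-bracket}.

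For the composition lemma I would induct on $r+s$. The base $r = s = 0$ is immediate since $L_\alpha L_\beta = L_{\alpha \w \beta} \in \mathcal{A}^0$. For the inductive step, take any $\gamma \in \Omega^\bu(M)$ and apply the graded Leibniz identity
$$ \br{PQ}{L_\gamma} = P \, \br{Q}{L_\gamma} + (-1)^{|Q||\gamma|} \br{P}{L_\gamma} \, Q, $$
which comes directly from unwinding the definition of the graded commutator. By definition $\br{Q}{L_\gamma} \in \mathcal{A}^{s-1}$ and $\br{P}{L_\gamma} \in \mathcal{A}^{r-1}$, so the inductive hypothesis places both summands in $\mathcal{A}^{r+s-1}$; hence $PQ \in \mathcal{A}^{r+s}$. (If $s = 0$ and $Q = L_\delta$, the first summand vanishes because $\br{L_\delta}{L_\gamma} = 0$ by graded-commutativity of the wedge, and only the second summand contributes.)

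With that in hand, the main induction on $k$ proceeds. For $k = 0$, $\beta$ is a function and $\Lambda_\beta = L_\beta \in \mathcal{A}^0$. For $k = 1$, $\Lambda_\beta = \iota_{\beta^\sharp}$ is a graded derivation of degree $-1$, so the identity $\br{D}{L_\alpha} = L_{D\alpha} \in \mathcal{A}^0$ recorded in the excerpt for any derivation $D$ places $\Lambda_\beta \in \mathcal{A}^1$. For $k \geq 2$, I would decompose $\beta$ pointwise as $\beta = \sum_i \mu_i \w \nu_i$ with $|\mu_i| = 1$ and $|\nu_i| = k-1$; taking adjoints of $L_\beta = \sum_i L_{\mu_i} L_{\nu_i}$ yields $\Lambda_\beta = \sum_i \Lambda_{\nu_i} \Lambda_{\mu_i}$, and combining the inductive hypothesis with the composition lemma places this in $\mathcal{A}^{k-1} \cdot \mathcal{A}^1 \subseteq \mathcal{A}^k$.

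The main technicality I anticipate is that the decomposition of $\beta$ into simple wedges is only available pointwise, not as a single global expression on $M$. This is harmless because $L_\alpha$ and $\Lambda_\beta$ are fiberwise $\R$-linear operators on $\Omega^\bu(M)$ (no differentiation is involved), so membership in $\mathcal{A}^r$ reduces to a pointwise statement on the exterior algebra at each point of $M$, where such decompositions always exist. Alternatively one could fix a local orthonormal coframe $\{e^i\}$ and write $\Lambda_\beta = \sum_I \beta_I \, \iota_{e_{i_k}} \cdots \iota_{e_{i_1}}$, a sum of compositions of $k$ derivations with an order-zero multiplication operator.
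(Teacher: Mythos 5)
The paper does not actually prove this proposition; it is quoted from Verbitsky and used as a black box, so there is no in-paper argument to compare against. Your proof is correct and self-contained, and it supplies exactly the two ingredients such a statement needs: (i) the multiplicativity of the filtration, $\mathcal{A}^r \circ \mathcal{A}^s \subseteq \mathcal{A}^{r+s}$, which genuinely strengthens the bracket estimate~\eqref{eqn:order-bracket} recorded in the paper and whose Leibniz-type identity $\br{PQ}{L_\gamma} = P\br{Q}{L_\gamma} + (-1)^{|Q||\gamma|}\br{P}{L_\gamma}Q$ you verify correctly; and (ii) the induction on $k$ by splitting off a degree-one factor and taking adjoints, with the base cases $\Lambda_f = L_f$ and $\Lambda_\alpha = \iota_{\alpha^\sharp}$ handled via the derivation criterion $\br{D}{L_\gamma} = L_{D\gamma}$. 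Two small points you leave implicit but which are needed and easy: each $\mathcal{A}^r$ is closed under sums (clear by induction, since $\mathcal{A}^0 = \{L_\beta\}$ is a vector space and the bracket is bilinear), and the degenerate case $r=0$ of the composition lemma is handled symmetrically to $s=0$. Your treatment of the local-versus-global issue is also sound: all operators involved are tensorial, the iterated brackets defining membership in $\mathcal{A}^r$ depend only on pointwise data, and an operator that is pointwise left-multiplication equals $L_{P(1)}$ with $P(1)$ smooth; the local coframe expression $\Lambda_\beta = \sum_I \beta_I\, \iota_{e_{i_k}}\cdots\iota_{e_{i_1}}$ then closes the argument. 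In short, this is a complete proof of a statement the paper only cites.
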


\begin{prop} \label{prop:order-adjoint-P}~\cite[Proposition 2.8]{Verbitsky}
If $P$ has degree $1$ and algebraic order $\leq 1$, then its metric adjoint $P^*$ has algebraic order $\leq 2$.
\end{prop}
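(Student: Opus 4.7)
My plan is to prove by induction on $|\beta|$ that $\br{P^*}{L_\beta} \in \mathcal{A}^1$ for every $\beta \in \Omega^{\bu}(M)$, after a preliminary reduction and with the help of a short auxiliary lemma.

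\emph{Reduction to derivations.} Setting $\tau = P(\mathbf{1}) \in \Omega^1(M)$ and $D = P - L_\tau$, one checks that $D \in \mathcal{A}^1$ with $D(\mathbf{1}) = 0$, and a short calculation using $\br{P}{L_\alpha} \in \mathcal{A}^0$ shows $\br{D}{L_\alpha} = L_{D\alpha}$; hence $D$ is a derivation of degree $1$. Since $P^* = D^* + \Lambda_\tau$ and $\Lambda_\tau \in \mathcal{A}^1$ by Proposition~\ref{lem:order-Lambda}, it suffices to treat $P$ a derivation.

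\emph{Auxiliary lemma.} The crux is the claim that if $E$ is a derivation of degree $0$ on $\Omega^{\bu}(M)$, then $E^* \in \mathcal{A}^1$. I establish this by induction on $|\beta|$ that $\br{E^*}{L_\beta} \in \mathcal{A}^0$. The case $|\beta|=0$ gives $\br{E^*}{L_f} = -L_{Ef}$, obtained by adjoining $\br{E}{L_f} = L_{Ef}$ and using $\Lambda_{Ef}=L_{Ef}$ for functions. For $|\beta|=1$, taking the metric adjoint gives $\br{E^*}{L_\beta}^* = \br{\Lambda_\beta}{E} = \br{\iota_{\beta^\sharp}}{E}$, the super-commutator of two derivations, hence itself a derivation of degree $-1$. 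The structural input at this step is that every derivation of degree $-1$ on $\Omega^{\bu}(M)$ is an interior product $\iota_X$ for a unique vector field $X$, which follows from the $C^\infty(M)$-linearity forced by such derivations vanishing on functions. Thus $\br{\iota_{\beta^\sharp}}{E} = \iota_{X_\beta}$, whose adjoint $L_{X_\beta^\flat}$ lies in $\mathcal{A}^0$. The case $|\beta| \geq 2$ is then handled by writing $\beta$ locally as a wedge of lower-degree forms and applying the graded Leibniz rule, using that $\mathcal{A}^0$ is closed under wedging.

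\emph{Main induction.} For the derivation $P$ I now verify $\br{P^*}{L_\beta} \in \mathcal{A}^1$ by induction on $|\beta|$. For $|\beta|=0$, adjoining $\br{P}{L_f} = L_{Pf}$ yields $\br{P^*}{L_f} = -\Lambda_{Pf}$; since $Pf \in \Omega^1(M)$, Proposition~\ref{lem:order-Lambda} gives $\Lambda_{Pf} \in \mathcal{A}^1$. For $|\beta|=1$, the identity $\br{P^*}{L_\beta}^* = \br{\iota_{\beta^\sharp}}{P}$ exhibits the adjoint as the super-commutator of a degree-$(-1)$ and a degree-$1$ derivation, hence a derivation of degree $0$; the auxiliary lemma then forces $\br{P^*}{L_\beta} \in \mathcal{A}^1$. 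For $|\beta| \geq 2$, write $\beta = \sum_i \alpha_i \w \gamma_i$ locally with $|\alpha_i|, |\gamma_i| < |\beta|$; the graded Leibniz rule
$$
\br{P^*}{L_{\alpha_i} L_{\gamma_i}} = \br{P^*}{L_{\alpha_i}}\, L_{\gamma_i} + (-1)^{|\alpha_i|}\, L_{\alpha_i}\, \br{P^*}{L_{\gamma_i}},
$$
the inductive hypothesis, and the straightforward closure of $\mathcal{A}^1$ under composition with $L_\omega$ on either side conclude the proof.

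The main obstacle is the auxiliary lemma in its $|\beta|=1$ case. The naive bound $\br{\mathcal{A}^r}{\mathcal{A}^s} \subseteq \mathcal{A}^{r+s-1}$ only gives $\br{\Lambda_\beta}{E} \in \mathcal{A}^1$ and does not control the adjoint $(\br{\Lambda_\beta}{E})^*$, which is what we need. The manoeuvre of recognizing $\br{\Lambda_\beta}{E}$ as a derivation of degree $-1$ and invoking the classification of such derivations as interior products is exactly what keeps us inside $\mathcal{A}^0$ at this step, and hence ultimately inside $\mathcal{A}^2$ for $P^*$.
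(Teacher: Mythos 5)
The paper does not actually prove this proposition: it is quoted from \cite{Verbitsky} with the proof deferred there, so your argument has to stand on its own --- and it does. The reduction to a derivation $D = P - L_{P(\mathbf{1})}$ is sound (since $\br{D}{L_\alpha} \in \mathcal{A}^0$ it equals $L_\gamma$ for some $\gamma$, and evaluating on $\mathbf{1}$ gives $\gamma = D\alpha$), and the key manoeuvre is exactly right: the naive bound $\br{\mathcal{A}^r}{\mathcal{A}^s} \subseteq \mathcal{A}^{r+s-1}$ gives no control over adjoints, so recognizing $\br{P^*}{L_\beta}^* = \br{\Lambda_\beta}{P} = \br{\iota(\beta^{\#})}{P}$ for $|\beta|=1$ as a graded commutator of derivations --- hence a derivation of degree $0$ --- and then proving separately that the adjoint of any degree-$0$ derivation lies in $\mathcal{A}^1$, via the fact that every degree-$(-1)$ derivation of $\Omega^{\bu}(M)$ is an interior product $\iota(X)$ whose adjoint is wedging with the $1$-form metric-dual to $X$, is what keeps everything inside $\mathcal{A}^0$ at the critical step. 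Two points deserve an explicit sentence in a final write-up. First, in the $|\beta| \geq 2$ steps you decompose $\beta$ only locally; this is harmless because all operators involved are differential operators, hence local, and membership in $\mathcal{A}^0$ (equivalently, the identity $Q = L_{Q\mathbf{1}}$) and therefore in $\mathcal{A}^1$ can be verified chart by chart --- but you should say so. Second, the classification of degree-$(-1)$ derivations as interior products rests on locality of derivations plus the fact that $\Omega^{\bu}(M)$ is locally generated by functions and $1$-forms, i.e.\ the same Fr\"olicher--Nijenhuis-type input underlying Proposition~\ref{prop:order-1-deg-1}; cite it rather than leaving it implicit. With those additions the proof is complete and self-contained.
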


\begin{prop}\label{prop:order-1-deg-1}\cite[Remark 2.4]{Verbitsky}
 Operators with algebraic order $\leq 1$ and degree $1$ are completely determined by their values on functions and $1$-forms.
\end{prop}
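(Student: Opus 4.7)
The plan is to show that if two operators $P_1, P_2$ with degree $1$ and algebraic order $\leq 1$ agree on functions and $1$-forms, then their difference $R = P_1 - P_2$ vanishes identically. The operator $R$ inherits degree $1$ and algebraic order $\leq 1$, and by assumption vanishes on $\Omega^0(M)$ and $\Omega^1(M)$; so it suffices to prove that any such $R$ must be zero.

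The first step is to upgrade the condition ``algebraic order $\leq 1$'' to the graded Leibniz rule. For each homogeneous $\beta \in \Omega^k(M)$, we have $\br{R}{L_\beta} \in \mathcal{A}^0$, so $\br{R}{L_\beta} = L_\gamma$ for some homogeneous $(k+1)$-form $\gamma$. Evaluating both sides at the constant function $1$ and using $R(1) = 0$ gives
\[
\gamma = \br{R}{L_\beta}(1) = R(\beta) - (-1)^{|\beta|} \beta \w R(1) = R(\beta),
\]
hence $\br{R}{L_\beta} = L_{R\beta}$ for every homogeneous $\beta$. Unpacking this identity when applied to an arbitrary homogeneous $\eta$ recovers
\[
R(\beta \w \eta) = (R\beta) \w \eta + (-1)^{|\beta|} \beta \w R(\eta),
\]
so $R$ is a graded derivation of degree $1$ on $\Omega^{\bu}(M)$.

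The second step is the standard induction on degree: locally, every $(k+1)$-form is a sum of wedge products of a $1$-form with a $k$-form, and the Leibniz rule together with the vanishing of $R$ on $\Omega^0(M)$ and $\Omega^1(M)$ inductively forces $R$ to vanish on every $\Omega^{k+1}(M)$. Hence $R = 0$ and $P_1 = P_2$. There is no substantive obstacle here --- the proposition is essentially an unpacking of definitions --- and the only noteworthy observation is the clean passage from ``algebraic order $\leq 1$, degree $1$, and vanishing on $\Omega^0$'' to the graded Leibniz rule, after which the conclusion follows from the local generation of the exterior algebra by functions and $1$-forms.
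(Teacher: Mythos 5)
Your proof is correct and is essentially the standard argument; the paper itself gives no proof of this statement but defers to Verbitsky's Remark 2.4, which rests on exactly the observation you make (algebraic order $\leq 1$ together with vanishing on functions upgrades to the graded Leibniz rule via $\br{R}{L_\beta} = L_{R\beta}$, after which $\Omega^{\bu}(M)$ is locally generated in degrees $0$ and $1$). The only point left tacit in your second step is that the ``locally a sum of wedge products'' reduction requires $R$ to be a local operator, but locality follows from the Leibniz rule by the usual bump-function argument, so there is no gap.
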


\subsection{Nearly K\"ahler manifolds} \label{subsec:nk-prelim}

Let $(M, g, J)$ be an almost Hermitian $2n$-dimensional manifold, and let $\omega \in \Omega^2(M)$ be the fundamental $2$-form $\omega(X, Y) = g(JX, Y)$. Given a complex-valued vector field $X \in \VF_{\C}(M)$, we denote by $X_{1,0}$ and $X_{0,1}$ its $(1,0)$ and $(0,1)$ components, namely $X_{1,0} = \frac{1}{2}(X - i JX)$ and $X_{0,1} = \frac{1}{2}(X + i J X)$. In the same way, given a complex-valued $k$-form $\alpha \in \Omega^k_{\C}(M)$, we denote its projection to $\Omega^{p,q}(M)$ by $\alpha^{p,q}$. The extension of the exterior derivative $d$ to the complex-valued forms $\Omega^{\bu}_{\C}(M)$ decomposes as
$$ d = \mu + \del + \delbar + \mubar. $$
These operators have bi-degrees $(2,-1)$, $(1,0)$, $(0,1)$, and $(-1,2)$, respectively. The equation $d^2 = 0$ yields
\begin{align}
\mu^2 & = 0, & \mubar^2 & = 0, \label{eqn:br-1} \\
\br{\del}{\mu} & = 0, & \br{\delbar}{\mubar} & = 0, \label{eqn:br-2} \\
\br{\delbar}{\mu} & = - \del^2, & \br{\del}{\mubar} & = - \delbar^2, \label{eqn:br-3}
\end{align}
and
\begin{equation}
\br{\del}{\delbar} + \br{\mu}{\mubar} = 0. \label{eqn:br-4}
\end{equation}
Note that for graded operators $P$ and $Q$, we have $\ol{\br{P}{Q}} = \br{\ol{P}}{\ol{Q}}$ where $\ol{P}\alpha := \ol{P\ol{\alpha}}$ for $\alpha \in \Omega^{\bu}_{\C}(M)$.

A bundle endomorphism $A$ of the complexified tangent bundle acts on complex differential forms as follows: for $\alpha \in \Omega^k_{\C}(M)$, define $A\alpha \in \Omega^k_{\C}(M)$ by
$$ (A\alpha)(X_1, \dots, X_k) := \alpha(AX_1, \dots, AX_k). $$
Note, it follows that $A(\alpha\wedge\beta) = (A\alpha)\wedge(A\beta)$. The main case of interest is $A = J$ which satisfies $J\alpha^{p,q} = i^{p-q}\alpha^{p,q}$. With this in mind, we consider the $J$-twisted exterior derivative
\begin{equation}
d^c = J^{-1} d J = i (\mu - \del + \delbar - \mubar). \label{eqn:d^c}
\end{equation}

Let $N$ be the Nijenhuis tensor of the almost complex structure $J$, which is the $TM$-valued $2$-form
$$ N(X,Y) = [X,Y] + J [JX,Y] + J[X,JY] - [JX,JY]. $$
In terms of the Levi-Civita connection $\nabla$ associated with the metric $g$, the Nijenhuis tensor can be expressed as:
\begin{equation} \label{eqn:ni}
N(X,Y)= J(\nabla_X J)Y - (\nabla_{JX} J)Y - (\, J(\nabla_Y J)X - (\nabla_{JY} J)X \,). 
\end{equation}
This uses $[X,Y]=\nabla_X Y-\nabla_Y X$ and $(\nabla_X J)Y = \nabla_X (JY)- J \nabla_X Y$ (see for instance~\cite[proof of Lemma 11.4]{Moroianu}).

Given vector fields $X, Y\in \VF(M)$, it is well-known, and easy to verify, that
$$ [X_{1,0},Y_{1,0}]_{0,1} = \frac{1}{8} (N(X,Y) + i J N(X,Y)), \qquad
[X_{0,1}, Y_{0,1}]_{1,0} = \frac{1}{8} (N(X,Y) - i J N(X,Y)). $$
The next computation shows that the real part of $\mu$ is proportional to the Nijenhuis tensor, viewed as a graded derivation. The latter is the extension of $N\colon \Omega^1(M) \to \Omega^2(M)$, $(N\alpha)(X,Y)=\alpha(N(X,Y))$ to $\Omega^{\bu}(M)$ as a degree 1 derivation. Given $\alpha\in \Omega^{1}(M)$, Cartan's formula implies:
\begin{align*}
\mubar(\alpha^{1,0})(X_{0,1},Y_{0,1}) &= 
d\alpha^{1,0}(X_{0,1},Y_{0,1})\\
&= X_{0,1} (\alpha^{1,0}(Y_{0,1}))- Y_{0,1}(\alpha^{1,0}(X_{0,1})) - \alpha^{1,0}([X_{0,1},Y_{0,1}])\\
&= -\alpha^{1,0}([X_{0,1},Y_{0,1}]_{1,0})\\
&= -\alpha([X_{0,1},Y_{0,1}]_{1,0})\\
 & = -\frac{1}{8}\alpha(N(X,Y) - iJ N(X,Y)).
\end{align*}
In the same way, $\mu({\alpha}^{0,1})(X_{1,0},Y_{1,0})= -\frac{1}{8}\alpha(N(X,Y) + iJ N(X,Y))$. From this we deduce
\begin{align}
(\mu + \mubar)(\alpha)(X,Y) & = -\frac{1}{4} \alpha(N(X,Y))=-\frac{1}{4}(N\alpha)(X,Y), \label{eqn:mu+mubar} \\
(\mu - \mubar)(\alpha)(X,Y) & = -\frac{i}{4} \alpha(JN(X,Y))= -\frac{i}{4}(N (J\alpha))(X,Y) . \label{eqn:mu-mubar}
\end{align}
Finally, since both $\mu + \mubar$ and $N$ are graded derivations that vanish on functions, we obtain
\begin{equation} \label{eqn:N-mu-mubar}
N=-4(\mu + \mubar)
\end{equation}
by Proposition~\ref{prop:order-1-deg-1}.

We also consider the Leftschetz operator $L := L_{\omega}$ and its metric adjoint $\Lambda := L_{\omega}^*$. It is well-known (see~\cite[Proposition 1.2.26]{Huybrechts}) that $[L,\Lambda]=H$, where $H$ is the counting operator,
$$ H=\sum_{k=0}^n(k-n)\pi_{\Omega^k(M)}. $$
In addition, $[H,L]=2L$ and $[H,\Lambda]=-2\Lambda$. 

\begin{defn}
 An almost Hermitian manifold $(M,g,J)$ is \emph{nearly K\"ahler} if
\begin{equation}\label{eq:adef-nk}
(\nabla_X J)(Y)=-(\nabla_YJ)(X) \mbox{ for all } X,Y\in \VF(M)
\end{equation} 
Equation~\eqref{eq:adef-nk} is equivalent to 
\begin{equation}\label{eq:def-nk}
(\nabla_X \omega) (Y,Z)= -(\nabla_Y \omega) (X,Z) \mbox{ for all } X,Y,Z \in \VF(M) .
\end{equation} 
which implies that $\nabla \omega \in \Omega^3(M)$. Therefore, $d\omega= 3 \nabla \omega$.
\end{defn}

\begin{rmk}
The nearly K\"ahler condition is only interesting in real dimension $\geq 6$, because nearly K\"ahler manifolds of real dimension $2$ or $4$ are necessarily K\"ahler. Moreover, the $6$-dimensional case is distinguished, as discussed in the introduction, because it induces an $\SU{3}$ structure. (See also our summary in Section~\ref{sec:conclusion} for an important discussion concerning this point.)
\end{rmk}

The covariant derivative $\nabla \omega$ is a section of a vector bundle $W$ with fibre $\R^{2n}\otimes \mathfrak{u}(n)$. According to the Gray-Hervella classification~\cite{GH}, there is a direct sum decomposition $W=W_1\oplus W_2 \oplus W_3 \oplus W_4$, whose summands in each fiber are irreducible $\mathfrak{u}(n)$-subrepresentations of $\R^{2n}\otimes \mathfrak{u}(n)$, and sections of subbundles of $W$ correspond to almost Hermitian structures whose torsion satisfies certain equations. For example, equation~\eqref{eq:def-nk} describes the subbundle $W_1$. In addition, the Nijenhuis tensor
detects the component in $W_1\oplus W_2$, while $d\omega$ determines the projection onto $W_1\oplus W_3 \oplus W_4$. Due to this, on a nearly K\"ahler manifold, the Nijenhuis tensor, $\nabla \omega$, and $d\omega$ are essentially the same object. The following result reflects this phenomenon:

\begin{lemma} \label{lem:nk-equalities}
Let $(M,g,J)$ be a nearly K\"ahler manifold, and let $\alpha \in \Omega^1(M)$. Then
\begin{align} \label{eqn:N-nabla-J} 
N(X,Y) & = 4J(\nabla_X J)Y,\\ \label{eqn:vanish-deldelbar-om} 
\del \omega & = \delbar \omega=0,\\ \label{eqn:nabla-J-mu-mubar}
\nabla_X (J\alpha) & = - \iota(X)(\mu - \mubar)(i \alpha) + J(\nabla_X \alpha),\\\label{eqn:nabla-om-mu-mubar}
i((\mu-\mubar)\alpha)(X, Y) & = -(\nabla_{\alpha^\#}\omega)(X,Y).
\end{align} Here $\iota(X)(\alpha)$ denotes contraction of the form $\alpha$ by the vector field $X$ and $\alpha^{\#}$ denotes the vector field metric dual to $\alpha$.
\end{lemma}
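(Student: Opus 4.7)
The plan is to establish the four identities sequentially, with each leveraging those before it. For \eqref{eqn:N-nabla-J}, I would manipulate \eqref{eqn:ni} using two ingredients: the nearly K\"ahler condition $(\nabla_U J) V = -(\nabla_V J) U$, and the algebraic identity $(\nabla_U J) \circ J = -J \circ (\nabla_U J)$ (obtained by differentiating $J^2 = -\Id$). Combining these, one rewrites $(\nabla_{JX} J) Y = -(\nabla_Y J)(JX) = J(\nabla_Y J) X = -J(\nabla_X J) Y$, and symmetrically $(\nabla_{JY} J) X = J(\nabla_X J) Y$. Substituting into \eqref{eqn:ni} collects the four terms into $4 J (\nabla_X J) Y$.

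For \eqref{eqn:vanish-deldelbar-om}, the strategy is to show $d\omega \in \Omega^{3,0}(M) \oplus \Omega^{0,3}(M)$. Since $\omega$ is of bidegree $(1,1)$, the decomposition $d = \mu + \del + \delbar + \mubar$ identifies $\del\omega$ and $\delbar\omega$ as the pure-type $(2,1)$ and $(1,2)$ components of $d\omega$, so both vanish once the bidegree claim is established. Using $(\nabla_X \omega)(Y, Z) = g((\nabla_X J) Y, Z)$ and $d\omega = 3 \nabla \omega$, it suffices to verify $(d\omega)(X_{1,0}, Y_{1,0}, Z_{0,1}) = 0$. By \eqref{eqn:N-nabla-J} extended complex-linearly, $(\nabla_U J) V = -\tfrac{1}{4} J N(U, V)$. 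Together with the standard identity $N(JU, V) = -J N(U, V)$, this shows $N(X_{1,0}, Y_{1,0})$ is a $(0,1)$-vector, and hence so is $(\nabla_{X_{1,0}} J) Y_{1,0}$. Since the $g$-pairing of two $(0,1)$-vectors vanishes on any Hermitian manifold, the claim follows; the $(1,2)$ component is handled by complex conjugation.

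The identity \eqref{eqn:nabla-J-mu-mubar} reduces to evaluating both sides on an arbitrary vector $Y$: a direct expansion using $(\nabla_X \alpha)(Y) = X(\alpha(Y)) - \alpha(\nabla_X Y)$ and $(J\alpha)(Y) = \alpha(JY)$ yields $(\nabla_X(J\alpha) - J(\nabla_X \alpha))(Y) = \alpha((\nabla_X J) Y)$, while applying \eqref{eqn:mu-mubar} to $i\alpha$ and then \eqref{eqn:N-nabla-J} gives $\iota(X)(\mu - \mubar)(i\alpha)(Y) = \tfrac{1}{4} \alpha(J N(X, Y)) = -\alpha((\nabla_X J) Y)$. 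For \eqref{eqn:nabla-om-mu-mubar}, the same chain of substitutions produces $i((\mu - \mubar)\alpha)(X, Y) = -\alpha((\nabla_X J) Y) = -g(\alpha^\#, (\nabla_X J) Y)$, which agrees with $-(\nabla_{\alpha^\#} \omega)(X, Y) = -g((\nabla_{\alpha^\#} J) X, Y)$ by the cyclic symmetry of the totally antisymmetric $3$-tensor $g((\nabla_U J) V, W) = (\nabla_U \omega)(V, W) = \tfrac{1}{3} (d\omega)(U, V, W)$.

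The main obstacle is the type analysis in \eqref{eqn:vanish-deldelbar-om}; once \eqref{eqn:N-nabla-J} is in hand, the remaining identities are direct substitutions.
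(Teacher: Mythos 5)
Your proposal is correct and follows essentially the same route as the paper: identity \eqref{eqn:N-nabla-J} via the nearly K\"ahler symmetry plus $(\nabla_X J)J = -J(\nabla_X J)$, and the remaining three identities by substituting $(\nabla_X J)Y = -\tfrac{1}{4}JN(X,Y)$ into \eqref{eqn:mu+mubar}--\eqref{eqn:mu-mubar} and using the total antisymmetry of $\nabla\omega$. The only cosmetic difference is in \eqref{eqn:vanish-deldelbar-om}, where you verify $(d\omega)^{2,1}=0$ by a pointwise type analysis of $N(X_{1,0},Y_{1,0})$, whereas the paper reaches the same conclusion by writing $d\omega = -\tfrac14 N\omega = (\mu+\mubar)\omega$ using \eqref{eqn:N-mu-mubar} and comparing bidegrees; these are equivalent.
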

\begin{proof}
Equation~\eqref{eqn:N-nabla-J} follows from~\eqref{eqn:ni},~\eqref{eq:adef-nk}, and the identity $(\nabla_X J) J = - J  (\nabla_X J) $. We have $(\nabla_{JX} J)Y = - (\nabla_Y J) J X = J (\nabla_Y J) X$, and thus
\begin{align*}
N(X,Y) & = J(\nabla_X J) Y - (\nabla_{JX} J) Y - (\, J(\nabla_Y J) X - (\nabla_{JY} J) X \,) \\
& = 2J(\,(\nabla_X J) Y - (\nabla_Y J) X ) = 4 J (\nabla_XJ)Y.
\end{align*}
In particular, $(\nabla_X \omega) (Y,Z) = g((\nabla_X J)Y, Z)= -\frac{1}{4}\omega(N(X,Y), Z)$. From this, we obtain
\begin{align*}
d\omega(X,Y,Z) &=-\frac{1}{4}(\omega(N(X,Y),Z) + \omega(N(Y,Z),X) + \omega(N(Z,X),Y))\\
&= -\frac{1}{4}(N\omega)(X,Y,Z)\\ 
&= (\mu + \mubar)(\omega)(X,Y,Z),
\end{align*}
which implies equation~\eqref{eqn:vanish-deldelbar-om}. To prove~\eqref{eqn:nabla-J-mu-mubar} it suffices to check $(\nabla_X J)(\alpha)= -\iota(X)(\mu-\mubar)(i\alpha)$ because 
$\nabla_X (J\alpha)= (\nabla_X J)(\alpha) + J(\nabla_X \alpha)$.
This follows from~\eqref{eqn:mu-mubar} and~\eqref{eqn:N-nabla-J}:
$$ ((\nabla_X J)\alpha)(Y)=\alpha ((\nabla_X J)(Y))= -\frac{1}{4}(J\alpha) (N(X,Y)) = -\frac{1}{4}(N(J\alpha))(X, Y) = -i((\mu-\mubar)\alpha)(X,Y). $$
From the above, and the fact that $\nabla \omega \in \Omega^3(M)$, we have
$$ i((\mu-\mubar)\alpha)(X,Y)= -\alpha((\nabla_X J)(Y)) = -g((\nabla_X J)(Y),\alpha^\#)=-(\nabla_X \omega)(Y,\alpha^\#)=-(\nabla_{\alpha^\#} \omega) (X,Y), $$
yielding equation~\eqref{eqn:nabla-om-mu-mubar}.
\end{proof}

Equation~\eqref{eqn:vanish-deldelbar-om} in Lemma~\ref{lem:nk-equalities} gives $\br{L}{\del}=\br{L}{\delbar}=0$ and 
 $\br{L_{\mu\omega}+ L_{\mubar\omega}}{d}=\br{L_{d\omega}}{d}=0$. Equating bi-degrees we get
\begin{align}
&\br{L_{\mu\omega}}{\mu}=\br{L_{\mu\omega}}{\del}=\br{L_{\mu\omega}}{\delbar}=\br{L_{\mubar\omega}}{\mubar}=\br{L_{\mubar\omega}}{\delbar}=\br{L_{\mubar\omega}}{\del}=0, \label{eqn:br-6} \\
&
\br{L_{\mu\omega}}{\mubar} + \br{L_{\mubar\omega}}{\mu}=0 \label{eqn:br-7}.
\end{align}

\begin{rmk}
It follows easily from equation~\eqref{eqn:nabla-om-mu-mubar} that a nearly K\"ahler manifold is actually K\"ahler if and only if $\mu = 0$. When $\mu \neq 0$, we call it a \emph{strict} nearly K\"ahler manifold. All the results we derive in this paper reduce to the classical results of K\"ahler geometry when $\mu = 0$, which forces also that $d \omega = 0$.
\end{rmk}

\section{Nearly K\"ahler identities} \label{sec:nk-id}

The classical K\"ahler identities can be written compactly as $[d^*,L]=-d^c$. One way to prove this is to express $[d^*,L]$ and $-d^c$ in terms of a local oriented orthonormal frame $(e_1,\dots, e_{2n})$. It turns out that they both coincide with
$$ \sum_{j=1}^{2n}{Je^i \wedge \nabla_{e_i}(\cdot)}, $$
where $e^i$ is metric dual to $e_i$. As we show below in Theorem~\ref{thm:nk-id}, on a nearly K\"ahler manifold, the difference between $[d^*,L]$ and $-d^c$ is an operator determined by $\mu$ and $\mubar$. This is obtained by comparing both $[d^*,L]$ and $-d^c$ with $\sum_{j=1}^{2n}{Je^i\wedge \nabla_{e_i}(\cdot)}$. The comparison of the latter two operators is carried out in the following lemma.

\begin{lemma}\label{lem:d^c}
Let $(M,g,J)$ be a nearly K\"ahler manifold and let $(e_1, \dots, e_{2n})$ be a local oriented orthonormal frame.
Then
$$ d^c \beta =- \sum_{j=1}^{2n}{Je^j\wedge \nabla_{e_j}\beta} + 2i (\mu-\mubar)\beta. $$
\end{lemma}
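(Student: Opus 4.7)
The plan is to apply Proposition~\ref{prop:order-1-deg-1}: both sides of the claimed identity will be shown to be derivations of $\Omega^{\bu}_{\C}(M)$ of degree $1$, hence of algebraic order $\leq 1$, so it suffices to verify the equality on $0$-forms and on $1$-forms. The operator $d^c = J^{-1}dJ$ is a degree-$1$ derivation because $J$ is an automorphism of the exterior algebra (since $J(\alpha\wedge\beta) = J\alpha \wedge J\beta$), and so the derivation property of $d$ transfers to $d^c$. On the right-hand side, $\mu$ and $\mubar$ are degree-$1$ derivations as pure bi-degree components of $d$, and $\beta \mapsto \sum_j Je^j \wedge \nabla_{e_j}\beta$ is a degree-$1$ derivation by the Leibniz rule for $\nabla$ combined with the fact that left multiplication by a $1$-form is a graded derivation.

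Equality on functions is immediate. For $f \in C^{\infty}(M)$, both $\mu f$ and $\mubar f$ vanish by bi-degree, and each side of the identity evaluates on a tangent vector $X$ to $-(JX)f$ (the right-hand side using $\sum_j e_j(f)\,e^j(JX) = df(JX)$; the left using $d^c f(X) = df(J^{-1}X) = -df(JX)$).

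The main step is the $1$-form case. For $\alpha \in \Omega^1(M)$ I would compute $d^c \alpha = J^{-1}d(J\alpha) = J^{-1}\sum_j e^j \wedge \nabla_{e_j}(J\alpha)$, then substitute equation~\eqref{eqn:nabla-J-mu-mubar} from Lemma~\ref{lem:nk-equalities}, namely $\nabla_{e_j}(J\alpha) = J\nabla_{e_j}\alpha - i\,\iota(e_j)(\mu-\mubar)\alpha$, to obtain
$$ d(J\alpha) = \sum_j e^j \wedge J\nabla_{e_j}\alpha - 2i(\mu-\mubar)\alpha, $$
where the second term arises from the standard identity $\sum_j e^j \wedge \iota(e_j)\eta = |\eta|\,\eta$ applied to the $2$-form $(\mu-\mubar)\alpha$. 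Since $d(J\alpha)$ is a $2$-form and $J^{-1} = J$ on $2$-forms, applying $J^{-1}$ and using that $J^2 = -1$ on $1$-forms (so $J(e^j \wedge J\nabla_{e_j}\alpha) = -Je^j \wedge \nabla_{e_j}\alpha$) together with the bi-degree observation that $\mu\alpha \in \Omega^{2,0}$ and $\mubar\alpha \in \Omega^{0,2}$ (so $J$ acts as $-1$ on each, giving $J(\mu-\mubar)\alpha = -(\mu-\mubar)\alpha$) delivers exactly the claimed right-hand side.

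The main obstacle, though not deep, is purely clerical: tracking signs, factors of $i$, and the $J$-eigenvalues on forms of mixed bi-degree. The nearly K\"ahler hypothesis enters in a single clean place, equation~\eqref{eqn:nabla-J-mu-mubar}, which packages the failure of $\nabla J = 0$ as a first-order expression in $\mu$ and $\mubar$; once this substitution is made, everything else is linear algebra on the exterior algebra.
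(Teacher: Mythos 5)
Your proof is correct and takes essentially the same route as the paper's: reduce to functions and $1$-forms via the derivation property and Proposition~\ref{prop:order-1-deg-1}, then on $1$-forms substitute equation~\eqref{eqn:nabla-J-mu-mubar}, apply $\sum_j e^j \wedge \iota(e_j) = k\,\mathrm{Id}$ on $k$-forms, and track the action of $J^{-1}$ on the bidegree components. All the sign and eigenvalue bookkeeping checks out.
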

\begin{proof}
Both $d^c$ and the right-hand side operator are degree $1$ derivations, so by Proposition~\ref{prop:order-1-deg-1} it suffices to verify the identity on functions and $1$-forms. In what follows, we use the facts that $d\beta= \sum_{j=1}^{2n}{e^j \wedge \nabla_{e_j}\beta}$, and $\sum_{j=1}^{2n}e^j \wedge \iota(e_j)\beta= k\beta$ for all $\beta \in \Omega^k(M)$.

If $f\in C^{\infty}(M)$, then $(\mu-\mubar)f=0$ so
$$ d^cf=J^{-1}dJf=J^{-1}df= -J\left(\sum_{j=1}^{2n}{e^j\wedge \nabla_{e_j}f }\right) = -\sum_{j=1}^{2n}{Je^j \wedge \nabla_{e_j}f} + 2i(\mu-\mubar)f. $$

For $\alpha \in \Omega^1(M)$, we have 
$$ d^c\alpha= J^{-1}dJ\alpha= J^{-1}\left(\sum_{j=1}^{2n}{e^j \wedge \nabla_{e_j}(J\alpha)}\right). $$
By~\eqref{eqn:nabla-J-mu-mubar}, $\nabla_{e_j}(J\alpha)=-\iota(e_j)(\mu-\mubar)(i\alpha) + J(\nabla_{e_j}\alpha)$.
The desired identity now follows from the equalities
\begin{align*}
 J^{-1}\left( -\sum_{j=1}^{2n}{e^j \wedge \iota(e_j)(\mu-\mubar)(i\alpha)}\right) & = 
 J^{-1} (-2(\mu-\mubar)(i\alpha))=2i(\mu-\mubar) \alpha, \\
 J^{-1}\left( \sum_{j=1}^{2n}{e^j \wedge J(\nabla_{e_j}\alpha)}\right) & = 
 -\sum_{j=1}^{2n}{Je^j \wedge \nabla_{e_j}\alpha},
\end{align*}
where we have used $(\mu-\mubar)(\alpha)\in \Omega^{2,0}(M)\oplus \Omega^{0,2}(M)$, and $J^{-1}=-\mathrm{Id}$ on $\Omega^{2,0}(M)\oplus \Omega^{0,2}(M)$ in the first equation.
\end{proof}

\begin{thm}[Nearly K\"ahler identities]\label{thm:nk-id}
Let $(M,g,J)$ be a nearly K\"ahler manifold. Then
$$ [d^*,L]=-d^c + 3i(\mu - \mubar) = 2i\mu +i\del - i\delbar -2i\mubar. $$
\end{thm}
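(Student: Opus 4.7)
The plan is to reduce the identity to checking on functions and $1$-forms via the algebraic order machinery of Section~\ref{subsec:diff-ops}. Set $Q := [d^*, L] + d^c - 3i(\mu - \mubar)$, so the first claimed equality amounts to $Q = 0$ on $\Omega^{\bu}_{\C}(M)$; the second equality is the purely algebraic rearrangement of~\eqref{eqn:d^c}, to be dispatched at the end.

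First I argue that $Q$ has degree $1$ and algebraic order $\leq 1$. Since $J$ (and hence $J^{-1}$) is an algebra endomorphism of the exterior algebra, the conjugated operator $d^c = J^{-1} d J$ is a degree-$1$ derivation, and so are $\mu$ and $\mubar$; each therefore has algebraic order $\leq 1$. For the bracket, $L = L_\omega \in \mathcal{A}^0$ by definition, while Proposition~\ref{prop:order-adjoint-P} applied to the derivation $d$ gives $d^* \in \mathcal{A}^2$, and~\eqref{eqn:order-bracket} yields $[d^*, L] \in \mathcal{A}^{2+0-1} = \mathcal{A}^1$. Thus $Q \in \mathcal{A}^1$ has degree $1$, and by Proposition~\ref{prop:order-1-deg-1} it suffices to verify $Qf = 0$ and $Q\alpha = 0$ for arbitrary $f \in C^{\infty}(M;\C)$ and $\alpha \in \Omega^1_{\C}(M)$.

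Both verifications use the standard formula $d^* = -\sum_j \iota(e_j) \nabla_{e_j}$ and the identity $\iota(e_j)\omega = -Je^j$ in a local oriented orthonormal frame $(e_1, \ldots, e_{2n})$, together with two nearly K\"ahler inputs. The first is the pointwise vanishing $\iota(e_j) \nabla_{e_j}\omega = 0$ for each $j$, which follows from $(\nabla_{e_j}\omega)(e_j, X) = g((\nabla_{e_j}J)e_j, X)$ and the immediate consequence $(\nabla_X J)X = 0$ of the defining condition~\eqref{eq:adef-nk}. The second is~\eqref{eqn:nabla-om-mu-mubar}, which reads $\nabla_{\alpha^\#}\omega = -i(\mu - \mubar)\alpha$ for $\alpha \in \Omega^1(M)$. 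On a function $f$, we have $[d^*, L]f = d^*(f\omega)$ since $d^* f = 0$; expanding $\nabla_{e_j}(f\omega) = (e_j f)\omega + f \nabla_{e_j}\omega$ and applying the first input simplifies $d^*(f\omega)$ to $\sum_j (e_j f) Je^j = J(df)$. Since $d^c f = J^{-1}(df) = -J(df)$ and $(\mu - \mubar)f = 0$, we conclude $Qf = 0$.

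On a $1$-form $\alpha$, I expand $[d^*, L]\alpha = d^*(\omega \wedge \alpha) - \omega \wedge d^*\alpha$ via the Leibniz rule for $\nabla_{e_j}$ and the derivation rule for $\iota(e_j)$. Two cancellations occur: the $\omega \wedge \iota(e_j)\nabla_{e_j}\alpha$ contributions reassemble into $\omega \wedge d^*\alpha$ and cancel the corresponding subtraction, and the term $\sum_j \iota(e_j)\nabla_{e_j}\omega \wedge \alpha$ vanishes by the first input. What remains is
$$ [d^*, L]\alpha = -\nabla_{\alpha^\#}\omega + \sum_{j=1}^{2n} Je^j \wedge \nabla_{e_j}\alpha, $$
using $\sum_j \alpha(e_j) \nabla_{e_j}\omega = \nabla_{\alpha^\#}\omega$ and $\iota(e_j)\omega = -Je^j$. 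The second input rewrites $-\nabla_{\alpha^\#}\omega = i(\mu - \mubar)\alpha$, while Lemma~\ref{lem:d^c} applied to $\alpha$ gives $\sum_j Je^j \wedge \nabla_{e_j}\alpha = -d^c\alpha + 2i(\mu - \mubar)\alpha$; adding the two produces $[d^*, L]\alpha = -d^c\alpha + 3i(\mu - \mubar)\alpha$, i.e., $Q\alpha = 0$. The main obstacle is the sign and index bookkeeping in the derivation expansion on the three-factor wedge product $\omega \wedge \alpha$, but nothing subtle enters once the two nearly K\"ahler inputs are in hand. Finally, substituting $d^c = i(\mu - \del + \delbar - \mubar)$ from~\eqref{eqn:d^c} into $-d^c + 3i(\mu - \mubar)$ yields $2i\mu + i\del - i\delbar - 2i\mubar$, establishing the second equality in the statement.
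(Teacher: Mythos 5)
Your proposal is correct and follows essentially the same route as the paper's proof: reduce to functions and $1$-forms via the algebraic-order argument ($d^* \in \mathcal{A}^2$, hence $[d^*,L] \in \mathcal{A}^1$, then Proposition~\ref{prop:order-1-deg-1}), and compute in an orthonormal frame using $\iota(e_j)\nabla_{e_j}\omega = 0$, equation~\eqref{eqn:nabla-om-mu-mubar}, and Lemma~\ref{lem:d^c}. The only cosmetic differences are that you package the two sides into a single operator $Q$ and compute $d^c f$ directly rather than citing Lemma~\ref{lem:d^c} in the function case.
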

\begin{proof}
Since $d$ has algebraic order $\leq 1$ and degree 1, its metric adjoint operator $d^*$ has algebraic order $\leq 2$ by Proposition~\ref{prop:order-adjoint-P}, so $[d^*,L]$ has algebraic order $\leq 1$ and degree $1$ by equation~\eqref{eqn:order-bracket}. Therefore, it suffices to prove the first equality for functions and $1$-forms by Proposition~\ref{prop:order-1-deg-1}. We fix a local oriented orthonormal frame $(e_1,\dots, e_{2n})$, so $d^*\beta=-\sum_{j=1}^{2n}\iota(e_j)(\nabla_{e_j}\beta)$ for any $\beta \in \Omega^\bu (M)$. In the sequel, we use $(\iota(e_j)\omega)(Y)=g(Je_j,Y)=-Je^j(Y)$ and $\iota(e_j)(\nabla_{e_j}\omega)=0$, which follows from~\eqref{eq:def-nk}.

Let $f\in C^\infty(M)$. Using $(\mu-\mubar) f =0$, and Lemma~\ref{lem:d^c}, we obtain
$$ [d^*,L](f)=d^*(f\omega)= -\sum_{j=1}^{2n}{\iota(e_j)((\nabla_{e_j}f) \omega + f \nabla_{e_j}\omega)}= \sum_{j=1}^{2n} {(\nabla_{e_j}f) Je^j} = (-d^c+ 3i(\mu - \mubar))f. $$
For $\alpha \in \Omega^1(M)$, using Lemma~\ref{lem:d^c} and equation~\eqref{eqn:nabla-om-mu-mubar}, we obtain
\begin{align*}
 [d^*,L](\alpha) & = d^*(\omega\wedge\alpha)- \omega\wedge d^*\alpha = -\sum_{j=1}^{2n}{\iota(e_j)(\nabla_{e_j}\omega \wedge\alpha + \omega \wedge \nabla_{e_j}\alpha)}- \omega\wedge d^*\alpha\\
&= -\sum_{j=1}^{2n}[(\iota(e_j)\nabla_{e_j}\omega)\wedge\alpha + \nabla_{e_j}\omega\wedge(\iota(e_j)\alpha) + (\iota(e_j)\omega)\wedge\nabla_{e_j}\alpha + \omega\wedge(\iota(e_j)\nabla_{e_j}\alpha)] - \omega\wedge d^*\alpha\\
&= -\sum_{j=1}^{2n}\alpha(e_j)\nabla_{e_j}\omega - Je^j\wedge\nabla_{e_j}\alpha = -\nabla_{\alpha^{\#}}\omega + \sum_{j=1}^{2n} { Je^j \wedge\nabla_{e_j}\alpha }
 = -d^c\alpha +3i(\mu - \mubar)\alpha.
\end{align*}
The second equality in the statement of the theorem follows immediately from equation~\eqref{eqn:d^c}.
\end{proof}
Equating bi-degrees and taking Hermitian adjoints we obtain the following:
\begin{cor}\label{cor:nk-id}
Let $(M,g,J)$ be a nearly K\"ahler manifold. Then
\begin{align} \label{nk-id-1} 
&[\del^*,L]=-i\delbar,\qquad &&[\del,\Lambda]=-i\delbar^*, \\ \label{nk-id-2}
&[\delbar^*,L]=i\del,\qquad &&[\delbar,\Lambda]=i\del^*,\\ \label{nk-id-3}
&[\mu^*,L]=-2i\mubar,\qquad &&[\mu,\Lambda]=-2i\mubar^*,\\ \label{nk-id-4}
&[\mubar^*,L]=2i\mu,\qquad &&[\mubar,\Lambda]=2i\mu^*.
\end{align}
\end{cor}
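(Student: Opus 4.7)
The plan is entirely bookkeeping: decompose the identity of Theorem~\ref{thm:nk-id} by bi-degree to extract the four identities in the left column, then take Hermitian adjoints to obtain the right column. No further geometric input beyond Theorem~\ref{thm:nk-id} is required.

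For the first step, I would observe that since $\mu, \del, \delbar, \mubar$ have bi-degrees $(2,-1), (1,0), (0,1), (-1,2)$, their metric adjoints $\mu^*, \del^*, \delbar^*, \mubar^*$ have bi-degrees $(-2,1), (-1,0), (0,-1), (1,-2)$. Because $L$ has bi-degree $(1,1)$, the four commutators $[\mu^*,L], [\del^*,L], [\delbar^*,L], [\mubar^*,L]$ land in bi-degrees $(-1,2), (0,1), (1,0), (2,-1)$ respectively, while the four summands of $2i\mu + i\del - i\delbar - 2i\mubar$ sit in exactly these same four bi-degrees. Since $d^* = \mu^* + \del^* + \delbar^* + \mubar^*$, matching bi-degrees in the identity $[d^*,L] = 2i\mu + i\del - i\delbar - 2i\mubar$ immediately yields the four identities in the left column.

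For the second step, I would invoke the standard Hermitian adjoint rules $(PQ)^* = Q^*P^*$ and $(cP)^* = \bar{c} P^*$ for $c \in \C$, which give $[P,Q]^* = [Q^*, P^*]$ for the ordinary commutator, together with $L^* = \Lambda$. Applying this to each left-column identity produces the corresponding right-column identity; for example, taking adjoints in $[\del^*, L] = -i\delbar$ yields $[\Lambda, \del] = i\delbar^*$, i.e., $[\del, \Lambda] = -i\delbar^*$. The only point requiring care is the sign and conjugation bookkeeping when passing through the adjoint, but there is no real obstacle to overcome.
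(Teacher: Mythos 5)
Your proposal is correct and matches the paper's own (one-line) proof exactly: the corollary is obtained by equating bi-degree components of the identity $[d^*,L]=2i\mu+i\del-i\delbar-2i\mubar$ from Theorem~\ref{thm:nk-id} and then taking Hermitian adjoints, using $L^*=\Lambda$ and $\br{P}{Q}^*=\br{Q^*}{P^*}$. Your bi-degree bookkeeping and the sample adjoint computation are both accurate, so there is nothing to add.
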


\begin{rmk}
Verbitsky~\cite{Verbitsky} proves~\eqref{nk-id-1} and~\eqref{nk-id-2} in all dimensions, and deduces~\eqref{nk-id-3} and~\eqref{nk-id-4} for 6-dimensional nearly K\"ahler manifolds using the corresponding $\SU{3}$ structure~\cite[Proposition 5.1]{Verbitsky}. (There are typographical errors in~\cite[equation (5.1)]{Verbitsky}, where $L_{\omega}$ and $\Lambda_{\omega}$ have been transposed.)
\end{rmk}

The following proposition, which we prove in all dimensions, was proved in the $6$-dimensional case in~\cite[Claim 4.2]{Verbitsky} using the $\SU{3}$ structure, which we do not have in higher dimensions. In this proposition, we investigate the operator $[\Lambda,L_{d\omega}]$, which plays an important role in the ``K\"ahler identities'' for almost Hermitian manifolds for which $d\omega \neq 0$. Its earliest appearance which the authors are aware of is in~\cite{Demailly-identities}. See also~\cite{FernandezHosmer}. Some authors call it the ``torsion operator'', although this can be misleading. When the almost complex structure $J$ is integrable, then the remaining torsion of the $\U{n}$ structure is indeed encoded by $d\omega$, and one can show that $[\Lambda,L_{d\omega}]$ vanishes if and only if $d\omega = 0$. But for non-integrable $J$, this ``torsion operator'' does not encode the full $\U{n}$ structure torsion.

\begin{prop} \label{prop:tor}
Let $(M,g,J)$ be a nearly K\"ahler manifold. Then
$$  [\Lambda,L_{d\omega}]=-3(\mu + \mubar). $$
Therefore, $[\Lambda, L_{\mu\omega}]=-3\mu$, $[\Lambda, L_{\mubar\omega}]=-3\mubar$, $[L_{\mu \omega}^*,L]=-3\mu^*$, and $[L_{\mubar \omega}^*,L]=-3\mubar^*$.
\end{prop}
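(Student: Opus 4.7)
The starting observation is that Lemma~\ref{lem:nk-equalities} gives $\del\omega=\delbar\omega=0$, so $d\omega=\mu\omega+\mubar\omega$; moreover, since $\mu$ and $\mubar$ are degree-one derivations, the derivation property $\br{D}{L_\alpha}=L_{D\alpha}$ yields $L_{\mu\omega}=\br{\mu}{L}$ and $L_{\mubar\omega}=\br{\mubar}{L}$. Hence $L_{d\omega}=\br{\mu}{L}+\br{\mubar}{L}$, and by linearity it suffices to prove the first pair of equalities in the ``therefore'' sentence: the main identity then follows by summing, and the last two equalities follow by taking metric adjoints, using $\br{P}{Q}^*=\br{Q^*}{P^*}$ together with $\Lambda^*=L$.

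To prove $\br{\Lambda}{L_{\mu\omega}}=-3\mu$, I would apply the graded Jacobi identity to $\br{\Lambda}{\br{\mu}{L}}$. Since $|\Lambda||\mu|=-2$ is even, this gives
$$ \br{\Lambda}{\br{\mu}{L}} = \br{\br{\Lambda}{\mu}}{L} + \br{\mu}{\br{\Lambda}{L}}. $$
For the second term, the standard $\mathfrak{sl}_2$ relation $[L,\Lambda]=H$ gives $\br{\Lambda}{L}=-H$, and a one-line computation on $\Omega^k(M)$ using $H|_{\Omega^k(M)}=k-n$ shows $\br{\mu}{H}=-\mu$, so the second term equals $\mu$. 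For the first term, antisymmetry in the graded commutator of two operators with $|\Lambda||\mu|$ even gives $\br{\Lambda}{\mu}=-\br{\mu}{\Lambda}$, which by~\eqref{nk-id-3} equals $2i\mubar^*$; then~\eqref{nk-id-4} yields $\br{\mubar^*}{L}=2i\mu$, so the first term equals $-4\mu$. Summing, $\br{\Lambda}{L_{\mu\omega}}=-4\mu+\mu=-3\mu$. The conjugate identity $\br{\Lambda}{L_{\mubar\omega}}=-3\mubar$ follows by the identical argument with the roles of $\mu$ and $\mubar$ swapped (or equivalently by complex conjugation, since $L$ and $\Lambda$ are real operators and $\ol{\mu}=\mubar$).

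The adjoint formulas $\br{L_{\mu\omega}^*}{L}=-3\mu^*$ and $\br{L_{\mubar\omega}^*}{L}=-3\mubar^*$ then follow by taking metric adjoints of the identities just established, using $\Lambda^*=L$. All structural inputs have already been established, namely the nearly K\"ahler identities of Corollary~\ref{cor:nk-id}, the vanishing $\del\omega=\delbar\omega=0$, the derivation property of $\mu,\mubar$, and the standard $\mathfrak{sl}_2$-triple relations for $L,\Lambda,H$. The main obstacle is purely bookkeeping: keeping track of the sign conventions in the graded commutator, antisymmetry, and the graded Jacobi identity, together with verifying that the bi-degree of each intermediate operator is as expected.
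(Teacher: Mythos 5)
Your proof is correct and follows essentially the same route as the paper's: both rest on the graded Jacobi identity applied to $\br{\Lambda}{\br{\cdot\,}{L}}$, the identities of Corollary~\ref{cor:nk-id}, and the relation $\br{\Lambda}{L}=-H$. The only difference is that you decompose into bidegree components first (via $L_{\mu\omega}=\br{\mu}{L}$ and $L_{\mubar\omega}=\br{\mubar}{L}$) and then apply Jacobi, whereas the paper applies Jacobi to $L_{d\omega}=\br{d}{L}$ using the full operator $d$ and equates bidegrees at the end.
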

\begin{proof}
Using $L_{d\omega}=\br{d}{L}$ and the graded Jacobi identity we obtain
$$ \br{\Lambda}{L_{d\omega}}=\br{\Lambda}{\br{d}{L}}=\br{\br{\Lambda}{d}}{L} +\br{d}{\br{\Lambda}{L}}. $$
We work on each term separately.

Theorem~\ref{thm:nk-id} implies $\br{\Lambda}{d}=\br{d^*}{L}^*= -i(2 \mu^* +\del^* - \delbar^* -2\mubar^*)$, and this together with Corollary~\ref{cor:nk-id} then give
$$ \br{\br{\Lambda}{d}}{L}=-i\br{2 \mu^* +\del^* - \delbar^* -2\mubar^*}{L}= -4\mu - \del - \delbar -4\mubar. $$

Next, we have $\br{d}{\br{\Lambda}{L}}=-\br{d}{H}$. If $\alpha \in \Omega^k(M)$, then $\br{d}{H}(\alpha)=(k-n)d\alpha - (k+1-n)d\alpha = -d\alpha$. Hence $\br{d}{\br{\Lambda}{L}} = d$ and thus
$$ \br{\Lambda}{L_{d\omega}}= -4\mu - \del - \delbar -4\mubar + d = -3(\mu + \mubar). $$
Taking components and using~\eqref{eqn:vanish-deldelbar-om} we obtain $[\Lambda, L_{\mu\omega}]=-3\mu$ and $[\Lambda, L_{\mubar\omega}]=-3\mubar$. The remaining two identities follow by taking Hermitian adjoints. 
\end{proof}

\begin{rmk}
Note that Fernandez--Hosmer~\cite{FernandezHosmer} generalized the K\"ahler identities to the almost Hermitian setting. However, to obtain our Theorem~\ref{thm:nk-id}, and Corollary~\ref{cor:nk-id} from their results, one needs the identity in Proposition~\ref{prop:tor}. 
\end{rmk}

\section{Consequences of nearly K\"ahler identities} \label{sec:consequences}

We establish some identities that follow from Corollary~\ref{cor:nk-id} and Proposition~\ref{prop:tor}, when combined with equations~\eqref{eqn:br-1}--\eqref{eqn:br-4},~\eqref{eqn:br-6}, and~\eqref{eqn:br-7}. 

Proposition~\ref{prop:com-laplacian} below provides relations between the commutators that appear as components of the Laplacian $\Delta_d = \br{d^*}{d} = d^*d + dd^*$. This result extends~\cite[Proposition 4.3]{Verbitsky} to arbitrary dimensions. To prove it, we first compute the components of $\br{d^*}{L_{d\omega}}$. The resulting identities generalize some intermediate computations from Verbitsky's proof, to all dimensions.

\begin{lemma} \label{lem:aux-commutators}
Let $(M,g,J)$ be a nearly K\"ahler manifold. Then
\begin{align}
 \br{\mubar^*}{L_{\mu\omega}} & = \br{\delbar^*}{L_{\mu\omega}}=0, \qquad 
 \br{\mu^*}{L_{\mubar\omega}}=\br{\del^*}{L_{\mubar\omega}}=0, \label{eqn:com-lw-1} \\ 
 \br{\delbar}{\mu} & = - \frac{i}{3} \br{\del^*}{L_{\mu \omega}}, \qquad \br{\del}{\mubar}= \frac{i}{3} \br{\delbar^*}{L_{\mubar \omega}}. \label{eqn:com-lw-2} 
\end{align}
\end{lemma}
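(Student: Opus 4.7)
The plan is to derive all six identities as consequences of combining Corollary~\ref{cor:nk-id} and Proposition~\ref{prop:tor} via the graded Jacobi identity, together with the already-established relations~\eqref{eqn:br-1},~\eqref{eqn:br-2}, and~\eqref{eqn:br-6}. The guiding observations are that $\br{\Lambda}{L_{\mu\omega}} = -3\mu$ and $\br{\Lambda}{L_{\mubar\omega}} = -3\mubar$ by Proposition~\ref{prop:tor}, while the Lefschetz-type operators $L_{\mu\omega}$ and $L_{\mubar\omega}$ have trivial graded commutator with the appropriate pieces of $d$ by~\eqref{eqn:br-6}. Each identity should reduce to a short Jacobi expansion in which almost every term collapses.

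For~\eqref{eqn:com-lw-1}, I would treat the four vanishing statements uniformly by first expressing the adjoint operator as a commutator, via Corollary~\ref{cor:nk-id}. For instance, to show $\br{\mubar^*}{L_{\mu\omega}} = 0$, rewrite $\br{\mu}{\Lambda} = -2i\mubar^*$ as $\mubar^* = \tfrac{i}{2}\br{\mu}{\Lambda}$ and expand:
$$ \br{\mubar^*}{L_{\mu\omega}} = \tfrac{i}{2}\br{\br{\mu}{\Lambda}}{L_{\mu\omega}} = \tfrac{i}{2}\Big(\br{\mu}{\br{\Lambda}{L_{\mu\omega}}} - \br{\Lambda}{\br{\mu}{L_{\mu\omega}}}\Big). $$
The first inner bracket becomes $\br{\mu}{-3\mu} = -6\mu^2 = 0$ by~\eqref{eqn:br-1}, and the second vanishes by~\eqref{eqn:br-6}. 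The identity $\br{\delbar^*}{L_{\mu\omega}} = 0$ follows the same template, using $\delbar^* = i\br{\del}{\Lambda}$, where now the two inner brackets vanish by $\br{\del}{\mu} = 0$ from~\eqref{eqn:br-2} and by~\eqref{eqn:br-6}, respectively. The two remaining identities $\br{\mu^*}{L_{\mubar\omega}} = 0$ and $\br{\del^*}{L_{\mubar\omega}} = 0$ then follow immediately by complex conjugation.

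For~\eqref{eqn:com-lw-2}, I would instead bracket Proposition~\ref{prop:tor}'s equation $\br{\Lambda}{L_{\mu\omega}} = -3\mu$ with $\delbar$. The Jacobi identity yields
$$ -3\br{\delbar}{\mu} = \br{\delbar}{\br{\Lambda}{L_{\mu\omega}}} = \br{\br{\delbar}{\Lambda}}{L_{\mu\omega}} + \br{\Lambda}{\br{\delbar}{L_{\mu\omega}}}; $$
the second term vanishes by~\eqref{eqn:br-6}, and the first equals $i\br{\del^*}{L_{\mu\omega}}$ by Corollary~\ref{cor:nk-id}, proving the claim. The companion identity for $\br{\del}{\mubar}$ is obtained by the symmetric argument, bracketing $\br{\Lambda}{L_{\mubar\omega}} = -3\mubar$ with $\del$. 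The main obstacle is bookkeeping the graded Jacobi signs, but fortunately these all turn out trivial in the cases above because one operator in each outer bracket is $\Lambda$, whose degree $-2$ is even.
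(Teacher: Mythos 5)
Your proposal is correct and follows essentially the same route as the paper: both derive all the identities from Proposition~\ref{prop:tor} via the graded Jacobi identity together with Corollary~\ref{cor:nk-id} and equations~\eqref{eqn:br-1},~\eqref{eqn:br-2},~\eqref{eqn:br-6}, the only (cosmetic) differences being that you expand $\br{\mubar^*}{L_{\mu\omega}}$ and $\br{\delbar^*}{L_{\mu\omega}}$ directly after rewriting the adjoints as commutators, whereas the paper runs the identical computation starting from $0=\br{\mu}{-3\mu}$ and $0=\br{\del}{-3\mu}$, and that you prove the second half of~\eqref{eqn:com-lw-2} by a symmetric bracket rather than by complex conjugation.
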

\begin{proof}
The equations on the right-hand side are complex conjugates of those on the left, so it suffices to verify the latter. Both $\br{\mubar^*}{L_{\mu\omega}} = 0$ and $\br{\delbar^*}{L_{\mu\omega}}=0$ are obtained by successively applying 
 equation~\eqref{eqn:br-1} or~\eqref{eqn:br-2}, Proposition~\ref{prop:tor}, the graded Jacobi identity, Corollary~\ref{cor:nk-id}, and equation~\eqref{eqn:br-6}:
\begin{align*}
0&=\br{\mu}{-3\mu}= \br{\mu}{\br{\Lambda}{L_{\mu\omega}}}
=\br{\br{\mu}{\Lambda}}{L_{\mu \omega}} + \br{\Lambda}{\br{\mu}{L_{\mu \omega}}}= -2i\br{\mubar^*}{L_{\mu \omega}},\\
0 & = \br{\del}{-3\mu}= \br{\del}{\br{\Lambda}{L_{\mu\omega}}}
=\br{\br{\del}{\Lambda}}{L_{\mu \omega}} + \br{\Lambda}{\br{\del}{L_{\mu \omega}}}= -i\br{\delbar^*}{L_{\mu \omega}}.
\end{align*}
Similarly, using Proposition~\ref{prop:tor}, the graded Jacobi identity, Corollary~\ref{cor:nk-id}, and equation~\eqref{eqn:br-6} we obtain
\begin{align*}
\br{\delbar}{\mu} & = -\frac{1}{3}\br{\delbar}{\br{\Lambda}{L_{\mu \omega}}}
=-\frac{1}{3} \br{\br{\delbar}{\Lambda}}{L_{\mu \omega}}-\frac{1}{3}\br{\Lambda}{\br{\delbar}{L_{\mu \omega}}} 
= - \frac{i}{3} \br{\del^*}{L_{\mu \omega}}. \qedhere
\end{align*}
\end{proof}

Note that Lemma~\ref{lem:aux-commutators} yields information about all but two components of $\br{d^*}{L_{d\omega}}$, namely $\br{\mu^*}{L_{\mu\omega}}$ and its conjugate $\br{\bar{\mu}^*}{L_{\bar{\mu}\omega}}$. These will appear in equation~\eqref{eqn:com-lw-A}, on the way to proving Proposition~\ref{proportional}. 

We prove Proposition~\ref{prop:com-laplacian} below using the same method as in~\cite[Proposition 4.3]{Verbitsky}, now with the generalized identities from Corollary~\ref{cor:nk-id} and Proposition~\ref{prop:tor}. This gives information on all possible graded commutators of components of $d$ with components of $d^*$, aside from $\br{P^*}{P}$ where $P \in \{\mu, \partial, \delbar, \mubar\}$. For any operator $P$, we define the \emph{$P$-Laplacian} to be the operator $\Delta_P := \br{P^*}{P}$. Note that only when $P$ has odd order do we have $\Delta_P = P^*P + PP^*$
\begin{prop} \label{prop:com-laplacian}
Let $(M,g,J)$ be a nearly K\"ahler manifold. Then
\begin{align}
\br{\delbar^*}{\mu} & = \br{\del^*}{\mubar}=\br{\mu^*}{\delbar}= \br{\mubar^*}{\del}=0,\\
\br{\mu^*}{\mubar} & = \br{\mubar^*}{\mu}=0,\\
\br{\delbar^*}{\del} & = -\br{\del^*}{\mu}=-\br{\mubar^*}{\delbar}, \\
\br{\del^*}{\delbar} & = -\br{\mu^*}{\del}=-\br{\delbar^*}{\mubar}.
\end{align}
\end{prop}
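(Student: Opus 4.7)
The plan is to prove each claimed identity by computing the target graded commutator in two different ways, or, when possible, by a single self-referential Jacobi computation. In each case, one of the entries of the target bracket is substituted using a nearly K\"ahler identity from Corollary~\ref{cor:nk-id} (which expresses each of $\mu, \del, \delbar, \mubar$ and their adjoints as a bracket with $L$ or $\Lambda$, up to constants); the graded Jacobi identity is then applied, and the basic relations~\eqref{eqn:br-1}--\eqref{eqn:br-4} together with further uses of Corollary~\ref{cor:nk-id} reduce the calculation to a small linear system in the target brackets, which is then solved.

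For the first vanishing $\br{\delbar^*}{\mu} = 0$: on the one hand, substitute $\delbar^* = i\br{\del}{\Lambda}$ from~\eqref{nk-id-1}, apply Jacobi, and use $\br{\del}{\mu} = 0$ from~\eqref{eqn:br-2} together with $\br{\Lambda}{\mu} = 2i\mubar^*$ to obtain $\br{\delbar^*}{\mu} = -2\br{\mubar^*}{\del}$. On the other hand, substitute $\mu = -\tfrac{i}{2}\br{\mubar^*}{L}$ from~\eqref{nk-id-4}, apply Jacobi, and use $\br{\delbar^*}{\mubar^*} = (\br{\mubar}{\delbar})^* = 0$ (again via~\eqref{eqn:br-2}) to obtain $\br{\delbar^*}{\mu} = -\tfrac{1}{2}\br{\mubar^*}{\del}$. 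Combining forces $\br{\mubar^*}{\del} = 0$ and hence $\br{\delbar^*}{\mu} = 0$; the remaining two first-block identities follow by complex conjugation. For the second block, substituting $\mubar^* = \tfrac{i}{2}\br{\mu}{\Lambda}$ from~\eqref{nk-id-3} into $\br{\mubar^*}{\mu}$ and applying Jacobi (using $\mu^2 = 0$ from~\eqref{eqn:br-1} and $\br{\Lambda}{\mu} = 2i\mubar^*$) gives $\br{\mubar^*}{\mu} = -\br{\mubar^*}{\mu}$, forcing the vanishing; complex conjugation yields $\br{\mu^*}{\mubar} = 0$.

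For the third block $\br{\delbar^*}{\del} = -\br{\del^*}{\mu} = -\br{\mubar^*}{\delbar}$, the same two-method strategy applies, except that now the non-vanishing right-hand side of~\eqref{eqn:br-3}, namely $\br{\delbar}{\mu} = -\del^2$, enters nontrivially. Substituting $\delbar^* = i\br{\del}{\Lambda}$ and applying two nested Jacobi expansions (using Corollary~\ref{cor:nk-id} repeatedly) produces the relation $\br{\delbar^*}{\del} = \br{\del^*}{\mu} - 2\br{\mubar^*}{\delbar}$. Substituting instead $\del = -i\br{\delbar^*}{L}$, using $(\delbar^*)^2 = -\br{\mubar^*}{\del^*}$ (which follows by taking adjoints in $\delbar^2 = -\br{\del}{\mubar}$ from~\eqref{eqn:br-3}), and applying Jacobi produces the second relation $\br{\delbar^*}{\del} = \br{\mubar^*}{\delbar} - 2\br{\del^*}{\mu}$. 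Equating the two expressions yields $\br{\del^*}{\mu} = \br{\mubar^*}{\delbar}$, and substituting back gives $\br{\delbar^*}{\del} = -\br{\del^*}{\mu}$. The fourth block is the complex conjugate of the third.

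The main obstacle is the careful sign and coefficient bookkeeping in the iterated Jacobi expansions of the third block, where three distinct target brackets appear simultaneously in a single calculation, and one must correctly track how the nonzero operator $\del^2$ (whose vanishing on a K\"ahler manifold is what makes the classical theory cleaner) couples them. Choosing which nk-identity to substitute in order to obtain a solvable rather than tautological system is the only creative step; the rest is mechanical.
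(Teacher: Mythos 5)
Your argument is correct --- I checked the coefficient bookkeeping throughout, including the two relations $\br{\delbar^*}{\del} = \br{\del^*}{\mu} - 2\br{\mubar^*}{\delbar}$ and $\br{\delbar^*}{\del} = \br{\mubar^*}{\delbar} - 2\br{\del^*}{\mu}$, which come out exactly as you state --- but for the first and third blocks it takes a genuinely different route from the paper's. The paper channels those computations through the operator $L_{\mu\omega}$: it first establishes $\br{\Lambda}{L_{\mu\omega}} = -3\mu$ (Proposition~\ref{prop:tor}) and the auxiliary identities of Lemma~\ref{lem:aux-commutators} (e.g.\ $\br{\delbar^*}{L_{\mu\omega}} = 0$ and $\br{\delbar}{\mu} = -\tfrac{i}{3}\br{\del^*}{L_{\mu\omega}}$), so that each target bracket is then obtained from a single Jacobi expansion containing only one unknown, since $\br{\Lambda}{\br{\del^*}{L_{\mu\omega}}}$ collapses to $-3\br{\del^*}{\mu}$ alone. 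You instead avoid $L_{\mu\omega}$ entirely and work only with Corollary~\ref{cor:nk-id} and equations~\eqref{eqn:br-1}--\eqref{eqn:br-3}; the price is that a direct expansion of, say, $-i\br{\br{\delbar}{\mu}}{\Lambda}$ produces two unknown brackets at once, so you need a second independent computation and must verify that the resulting $2\times 2$ linear system (coefficient rows $(1,-2)$ and $(-2,1)$) is nondegenerate --- which it is, and similarly the mismatch of the coefficients $-2$ and $-\tfrac{1}{2}$ is what forces the first-block vanishing. Your second block is essentially identical to the paper's self-referential computation using $\mubar^2 = 0$. The net effect is a proof that is logically more economical, needing neither Proposition~\ref{prop:tor} nor Lemma~\ref{lem:aux-commutators} as inputs, at the cost of somewhat more computation; the paper's detour through $L_{\mu\omega}$ is not wasted, though, since those auxiliary identities are reused later, for instance in Proposition~\ref{proportional} and in the remark computing $\br{L}{\Delta_d}$.
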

\begin{proof}
Observe that by taking conjugates and adjoints, it suffices to prove $\br{\delbar^*}{\mu}=0$, $\br{\mubar}{\mu^*}=0$ and $\br{\delbar^*}{\del}=-\br{\del^*}{\mu}$. To verify $\br{\delbar^*}{\mu}=0$, we use Proposition~\ref{prop:tor} followed by the graded Jacobi identity, and finally apply $[\delbar^*,\Lambda] = [L,\delbar]^*= -L_{\delbar\omega}^* = 0$ and equation~\eqref{eqn:com-lw-1}:
$$ -3\br{\delbar^*}{\mu}= \br{\delbar^*}{\br{\Lambda}{L_{\mu\omega}}}= \br{\br{\delbar^*}{\Lambda}}{L_{\mu\omega}} + \br{\Lambda}{\br{\delbar^*}{L_{\mu \omega}}}=0. $$
Using equation~\eqref{nk-id-4} twice, the graded Jacobi identity, and equation~\eqref{eqn:br-1} we obtain
$$ 2i\br{\mubar}{\mu^*}=\br{\mubar}{\br{\mubar}{\Lambda}} = \br{\br{\mubar}{\mubar}}{\Lambda} - \br{\mubar}{\br{\mubar}{\Lambda}} =-2i\br{\mubar}{\mu^*}. $$
Thus $\br{\mubar}{\mu^*}=0$.

To obtain the final identity, first note that $\br{\del}{\del} = 2\del^2 = -2\br{\delbar}{\mu} = \frac{2i}{3}\br{\del^*}{L_{\mu\omega}}$ by equations~\eqref{eqn:br-3} and~\eqref{eqn:com-lw-2}. Now apply $\br{\Lambda}{\cdot\ }$ to both sides. We can simplify the resulting expression on the left hand side by using the graded Jacobi identity and equation~\eqref{nk-id-1}:
$$\br{\Lambda}{\br{\del}{\del}} = \br{\br{\Lambda}{\del}}{\del} + \br{\del}{\br{\Lambda}{\del}} = \br{i\delbar^*}{\del} + \br{\del}{i\delbar^*} = 2i\br{\delbar^*}{\del}.$$

As for $\frac{2i}{3}\br{\Lambda}{\br{\del^*}{L_{\mu\omega}}}$, combining the observation $[\Lambda, \del^*] = [\del, L]^* = L_{\del\omega}^* = 0$ with Proposition~\ref{prop:tor} yields
$$\frac{2i}{3}\br{\Lambda}{\br{\del^*}{L_{\mu\omega}}} = \frac{2i}{3}\br{\br{\Lambda}{\del^*}}{L_{\mu\omega}} + \frac{2i}{3}\br{\del^*}{\br{\Lambda}{L_{\mu\omega}}} = \frac{2i}{3}\br{\del^*}{-3\mu} = -2i\br{\del^*}{\mu}$$
which completes the proof.
\end{proof}

We close this section by showing that the differences of Laplacians $\Delta_{\del}-\Delta_{\delbar}$, $\Delta_{\mu}-\Delta_{\mubar}$, and $\Delta_{L_{\mu\omega}}-\Delta_{L_{\mubar\omega}}$ are all proportional.

\begin{prop}\label{proportional}
Let $(M,g,J)$ be a nearly K\"ahler manifold. Then
\begin{align}
 \br{\mubar}{\mu} & = \frac{i}{9}
\br{\Delta_{L_{\mu\omega}}- \Delta_{L_{\mubar \omega}}}{L}, \label{eqn:com-lw-3} \\
\br{\Lambda}{\br{\mu}{L_{\mu\omega}}} & = - \frac{i}{3} \br{\Delta_{L_{\mu\omega}}+ \Delta_{L_{\mubar \omega}}}{L}, \label{eqn:com-lw-4}\\
\Delta_{\del} - \Delta_{\delbar}= -2(\Delta_{\mu}- \Delta_{\mubar})&= -\frac{2}{9} (\Delta_{L_{\mu\omega}} -\Delta_{L_{\mubar\omega}}). \label{eqn:diff-lapl}
\end{align}
\end{prop}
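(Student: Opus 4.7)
My strategy is to derive all three identities from a single key computation combined with systematic graded-Jacobi manipulations using Corollary~\ref{cor:nk-id} and Proposition~\ref{prop:tor}. The main step is to prove
$$
\br{\mu^*}{L_{\mu\omega}} = \tfrac{3i}{2}\br{\mu}{\mubar},
\qquad
\br{\mubar^*}{L_{\mubar\omega}} = -\tfrac{3i}{2}\br{\mu}{\mubar},
$$
the second being the complex conjugate of the first. To prove the first, I would write $\mu^* = \tfrac{i}{2}\br{\Lambda}{\mubar}$ from~\eqref{nk-id-4} and apply graded Jacobi to $\br{\br{\Lambda}{\mubar}}{L_{\mu\omega}}$. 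The two resulting inner brackets simplify: $\br{\mubar}{L_{\mu\omega}} = L_{\mubar(\mu\omega)} = 0$ by~\eqref{eqn:br-6} and the derivation property, while $\br{\Lambda}{L_{\mu\omega}} = -3\mu$ by Proposition~\ref{prop:tor}, leaving $3\br{\mu}{\mubar}$ after multiplication by $\tfrac{i}{2}$.

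The identities~\eqref{eqn:com-lw-3} and~\eqref{eqn:com-lw-4} then follow quickly. A short Jacobi expansion of $\Delta_{L_{\mu\omega}} = \br{L_{\mu\omega}^*}{L_{\mu\omega}}$ against $L$, using $\br{L_{\mu\omega}}{L}=0$ and $\br{L_{\mu\omega}^*}{L}=-3\mu^*$, gives the reduction $\br{\Delta_{L_{\mu\omega}}}{L} = -3\br{\mu^*}{L_{\mu\omega}}$ (and likewise for the conjugate). Summing the key identity with its conjugate gives zero, so $\br{\Delta_{L_{\mu\omega}}+\Delta_{L_{\mubar\omega}}}{L}=0$; combined with the trivial $\br{\mu}{L_{\mu\omega}}=L_{\mu^2\omega}=0$, this proves~\eqref{eqn:com-lw-4}. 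Taking the difference instead yields $3i\br{\mu}{\mubar}$, hence $\br{\Delta_{L_{\mu\omega}}-\Delta_{L_{\mubar\omega}}}{L}=-9i\br{\mu}{\mubar}$, which is~\eqref{eqn:com-lw-3} after rearrangement.

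For~\eqref{eqn:diff-lapl}, the first equality follows by expanding $\Delta_\del = -i\br{\del^*}{\br{\delbar^*}{L}}$ (using $\del=-i\br{\delbar^*}{L}$) via Jacobi, rewriting $\br{\del^*}{\delbar^*} = -\br{\mu^*}{\mubar^*}$ (the adjoint of~\eqref{eqn:br-4}), and evaluating $\br{\br{\mu^*}{\mubar^*}}{L} = 2i(\Delta_\mu-\Delta_\mubar)$ via one more Jacobi application of~\eqref{nk-id-3} and~\eqref{nk-id-4}. For the second equality, I would compute $\br{\mu^*}{\br{\Lambda}{L_{\mu\omega}}}$ in two ways: directly as $\br{\mu^*}{-3\mu} = -3\Delta_\mu$, and via Jacobi using $\br{\mu^*}{\Lambda}=-L_{\mu\omega}^*$, the key identity, and the auxiliary evaluation $\br{\Lambda}{\br{\mu}{\mubar}}=-2i(\Delta_\mu-\Delta_\mubar)$ (itself a Jacobi expansion via Corollary~\ref{cor:nk-id}) to obtain $-\Delta_{L_{\mu\omega}}+3(\Delta_\mu-\Delta_\mubar)$. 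Equating yields $\Delta_{L_{\mu\omega}}=6\Delta_\mu-3\Delta_\mubar$ and (by conjugation) $\Delta_{L_{\mubar\omega}}=6\Delta_\mubar-3\Delta_\mu$, whose difference is $9(\Delta_\mu-\Delta_\mubar)$ as required.

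The main obstacle is avoiding circularity: many natural Jacobi moves merely reproduce the input relations and produce tautologies. The break in symmetry that makes everything work is the vanishing $\mubar(\mu\omega)=0$ (and its conjugate), which collapses the Jacobi expansion in the key step to a single nonzero term and locks in the correct numerical coefficient; once this is in hand, the remaining arguments are mechanical.
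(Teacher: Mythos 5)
Your key step fails, and the failure is precisely at the point you identify as the ``break in symmetry.'' You claim $\br{\mubar}{L_{\mu\omega}} = L_{\mubar(\mu\omega)} = 0$ by~\eqref{eqn:br-6}, but~\eqref{eqn:br-6} does not contain this identity: the only relation involving $\mubar(\mu\omega)$ is~\eqref{eqn:br-7}, which says $\mubar(\mu\omega) = -\mu(\mubar\omega)$, and neither term vanishes separately. Indeed, on a strict nearly K\"ahler $6$-manifold $\mu\omega = \tfrac{3\lambda}{2}\Theta$ and $\mubar\Theta \neq 0$ --- this is exactly the computation in the remark following Theorem~\ref{thm:nk-hodge} that forces $\mathcal{H}^{3,0}(M)=\{0\}$. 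Consequently your ``key identity'' is false. Concretely, testing on functions: $\br{\mu^*}{L_{\mu\omega}}f = \mu^*(f\,\mu\omega)$, and from $-3\br{\mu^*}{L_{\mu\omega}} = \br{\Delta_{L_{\mu\omega}}}{L}$ together with the explicit formula $\Delta_{L_{\mu\omega}}|_{\Omega^{p,q}(M)} = \tfrac{9\lambda^2}{4}\bigl(1-p+\tfrac{1}{2}p(p-1)\bigr)\mathrm{Id}$ of Remark~\ref{rem:strict-nk-diff-lapl} one gets $\br{\mu^*}{L_{\mu\omega}}f = \tfrac{3\lambda^2}{4}f\omega \neq 0$, whereas $\tfrac{3i}{2}\br{\mu}{\mubar}f = 0$. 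The same test refutes your downstream claims: $\br{\Delta_{L_{\mu\omega}}+\Delta_{L_{\mubar\omega}}}{L}$ is \emph{not} zero (on functions it is $-\tfrac{9\lambda^2}{2}L$), and $\Delta_{L_{\mu\omega}} \neq 6\Delta_{\mu}-3\Delta_{\mubar}$ (on functions $\Delta_\mu = \Delta_{\mubar} = 0$ for degree reasons while $\Delta_{L_{\mu\omega}} = \tfrac{9\lambda^2}{4}\mathrm{Id}$). Incidentally, the printed~\eqref{eqn:com-lw-4} has a typo --- its left side should be $\br{\Lambda}{\br{\mu}{L_{\mubar\omega}}}$, since $\br{\mu}{L_{\mu\omega}} = L_{\mu^2\omega} = 0$ identically while the right side is not zero --- so your ``proof'' of it by declaring both sides zero proves neither the printed nor the intended statement.

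What the paper does instead, and what your shortcut misses, is to expand \emph{both} $\br{\mubar}{\br{\Lambda}{L_{\mu\omega}}}$ and $\br{\mu}{\br{\Lambda}{L_{\mubar\omega}}}$ by the graded Jacobi identity, use~\eqref{eqn:br-7} to identify the non-vanishing leftover terms $\br{\Lambda}{\br{\mubar}{L_{\mu\omega}}} = -\br{\Lambda}{\br{\mu}{L_{\mubar\omega}}}$, and then add and subtract the two expansions so that this unknown operator either cancels (yielding~\eqref{eqn:com-lw-A}, hence~\eqref{eqn:com-lw-3} and~\eqref{eqn:diff-lapl}) or is isolated (yielding~\eqref{eqn:com-lw-B}, hence the corrected~\eqref{eqn:com-lw-4}). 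Your difference formulas come out numerically right only because the term you illegitimately set to zero cancels in those particular combinations; the individual identities you assert along the way are false, so the argument as written is not a proof and needs to be restructured along these two-expansion lines. The one piece of your proposal that is sound and independent of the faulty step is the derivation of $\Delta_{\del}-\Delta_{\delbar} = -2(\Delta_{\mu}-\Delta_{\mubar})$ from~\eqref{eqn:br-4} and Corollary~\ref{cor:nk-id}, which is essentially the paper's argument (the paper applies $\br{\Lambda}{\cdot\,}$ to~\eqref{eqn:br-4} rather than expanding $\Delta_{\del}$ directly, but the content is the same).
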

\begin{proof}
Making use of Proposition~\ref{prop:tor}, the graded Jacobi identity, Corollary~\ref{cor:nk-id}, and equation~\eqref{eqn:br-7} (for the first equality only) we obtain
\begin{align*}
\br{\mubar}{\mu} &= -\frac{1}{3}\br{\mubar}{\br{\Lambda}{L_{\mu \omega}}}
=-\frac{1}{3} \br{\br{\mubar}{\Lambda}}{L_{\mu \omega}}-\frac{1}{3}\br{\Lambda}{\br{\mubar}{L_{\mu \omega}}} 
= - \frac{2i}{3} \br{\mu^*}{L_{\mu \omega}} + \frac{1}{3}\br{\Lambda}{\br{\mu}{L_{\mubar \omega}}} , \\
\br{\mubar}{\mu} &= -\frac{1}{3}\br{\mu}{\br{\Lambda}{L_{\mubar \omega}}}
=-\frac{1}{3} \br{\br{\mu}{\Lambda}}{L_{\mubar \omega}}-\frac{1}{3}\br{\Lambda}{\br{\mu}{L_{\mubar \omega}}} 
= \frac{2i}{3} \br{\mubar^*}{L_{\mubar \omega}} -\frac{1}{3}\br{\Lambda}{\br{\mu}{L_{\mubar \omega}}}.
\end{align*}
Adding and subtracting these, we get
\begin{align}
\br{\mubar}{\mu} & =  - \frac{i}{3} \br{\mu^*}{L_{\mu \omega}} + \frac{i}{3} \br{\mubar^*}{L_{\mubar \omega}}, \label{eqn:com-lw-A} \\
\br{\Lambda}{\br{\mu}{L_{\mubar\omega}}} & = i \br{\mu^*}{L_{\mu \omega}} + i\br{\mubar^*}{L_{\mubar \omega}}. \label{eqn:com-lw-B}
\end{align}
Then Proposition~\ref{prop:tor}, the graded Jacobi identity, and the identity $\br{L_{\mu\omega}}{L}=0$ yield:
$$ -3\br{\mu^*}{L_{\mu\omega}} = \br{\br{L_{\mu\omega}^*}{L}}{L_{\mu\omega}} = \br{L_{\mu\omega}^*}{\br{L}{L_{\mu\omega}}} - \br{L}{\br{L_{\mu\omega}^*}{L_{\mu\omega}}} = -\br{L}{\Delta_{L_{\mu\omega}}} = \br{\Delta_{L_{\mu\omega}}}{L}. $$
Taking conjugates gives $-3\br{\mubar^*}{L_{\mubar{\omega}}}=\br{\Delta_{L_{\mubar \omega}}}{L}$. These two equations, together with~\eqref{eqn:com-lw-A} and~\eqref{eqn:com-lw-B}, yield~\eqref{eqn:com-lw-3} and~\eqref{eqn:com-lw-4}.

To obtain~\eqref{eqn:diff-lapl}, we first apply $\br{\Lambda}{\cdot\ }$ to both sides of equation~\eqref{eqn:com-lw-A}. We can simplify the resulting expression on the left hand side by using the graded Jacobi identity and equations~\eqref{nk-id-3} and~\eqref{nk-id-4}:
$$ \br{\Lambda}{\br{\mubar}{\mu}} = \br{\br{\Lambda}{\mubar}}{\mu} + \br{\mubar}{\br{\Lambda}{\mu}} = \br{-2i\mu^*}{\mu} + \br{\mubar}{2i\mubar^*} = -2i(\Delta_{\mu} - \Delta_{\mubar}). $$
As for $-\frac{i}{3}\br{\Lambda}{\br{\mu^*}{L_{\mu\omega}}} + \frac{i}{3}\br{\Lambda}{\br{\mubar^*}{L_{\mubar\omega}}}$, the second term is the conjugate of the first, so we focus on the first one. The graded Jacobi identity, the identity $[\Lambda, \mu^*] = [\mu, L]^* = L_{\mu\omega}^*$ and Proposition~\ref{prop:tor} yields
$$ \br{\Lambda}{\br{\mu^*}{L_{\mu\omega}}} = \br{\br{\Lambda}{\mu^*}}{L_{\mu\omega}} + \br{\mu^*}{\br{\Lambda}{L_{\mu\omega}}} = \br{L_{\mu\omega}^*}{L_{\mu\omega}} + \br{\mu^*}{-3\mu} = \Delta_{L_{\mu\omega}} - 3\Delta_{\mu}, $$
and similarly $\br{\Lambda}{\br{\mubar^*}{L_{\mubar\omega}}} = \Delta_{L_{\mubar\omega}} - 3\Delta_{\mubar}$. Therefore
$$ -2i(\Delta_{\mu}-\Delta_{\mubar}) = - \frac{i}{3} ( \Delta_{L_{\mu\omega}} -3 \Delta_{\mu} ) + \frac{i}{3} (\Delta_{L_{\mubar\omega}} -3 \Delta_{\mubar}) 
=-\frac{i}{3} (\Delta_{L_{\mu\omega}} - \Delta_{L_{\mubar\omega}}) +i(\Delta_{\mu} - \Delta_{\mubar}), $$
which implies $\Delta_{\mu}- \Delta_{\mubar}= \frac{1}{9} (\Delta_{L_{\mu\omega}} -\Delta_{L_{\mubar\omega}})$.

Finally, expanding the commutator of $\Lambda$ and equation~\eqref{eqn:br-4}, that is $\br{\Lambda}{\br{\del}{\delbar}}=-\br{\Lambda}{\br{\mu}{\mubar}}$, and using Corollary~\ref{cor:nk-id}, we obtain $i\Delta_{\delbar} -i \Delta_{\del}= -2i \Delta_{\mubar} + 2i \Delta_{\mu}$. Thus we obtain
$$ \Delta_{\del} - \Delta_{\delbar}= -2(\Delta_{\mu}- \Delta_{\mubar})=-\frac{2}{9} (\Delta_{L_{\mu\omega}} -\Delta_{L_{\mubar\omega}}). \qedhere $$
\end{proof}

\begin{rmk}
Note that when $\mu = 0$, so that we have a K\"ahler structure, then equation~\eqref{eqn:diff-lapl} reduces to the well-known equality $\Delta_{\del} = \Delta_{\delbar}$ of K\"ahler geometry.
\end{rmk}

\begin{rmk} \label{rem:strict-nk-diff-lapl}
We now argue that equations~\eqref{eqn:com-lw-1} and~\eqref{eqn:com-lw-3} generalize those in~\cite[Proposition 3.2, Corollary 3.3]{Verbitsky}. Let $(M,g, \omega)$ be a nearly K\"ahler 6-manifold. As we discussed in the introduction, there is a complex unit-length volume form $\Theta \in \Omega^{3,0}(M)$ and $\lambda \in \R$ such that $d\omega=3 \lambda \mathrm{Re}(\Theta)$. (See~\cite[Proposition 1.1]{Verbitsky}.) In the two results quoted above, Verbitsky shows
\begin{align}\label{eqn:strict-1} 
\br{\mubar}{\mu}|_{\Omega^{p,q}(M)} &= i\frac{\lambda^2}{4} (p-q) L, \\
\Delta_{\del}- \Delta_{\delbar}|_{\Omega^{p,q}(M)} &= \frac{\lambda^2}{4} (3-p-q)(p-q)\mathrm{Id}.\label{eqn:strict-2}
\end{align}
(Note, in~\cite{Verbitsky}, the constants $\frac{\lambda^2}{4}$ instead appear as $\lambda^2$. This is due to a typographical error in~\cite[equation (1-4)]{Verbitsky}, where $\lambda$ should be replaced by $\frac{\lambda}{2}$. The upshot of this is that every occurrence of $\lambda$ in~\cite{Verbitsky} should be replaced by $\frac{\lambda}{2}$ for comparison with our results).

We first compute $\Delta_{L_{\mu \omega}} - \Delta_{L_{\mubar \omega}}$. In this case, $\mu \omega= \frac{3\lambda}{2} \Theta$, and so $\Delta_{L_{\mu\omega}} = \frac{9\lambda^2}{4} \br{L_{\Theta}^*}{ L_{\Theta}}$. Consider a local oriented orthonormal frame $(e_1,\dots,e_6)$ with $e_{2k}=Je_{2k-1}$, and write $\theta_k = \frac{1}{\sqrt{2}}(e_{2k-1}- i e_{2k})$ and $\theta^{k}=\frac{1}{\sqrt{2}}(e^{2k-1}+ i e^{2k})$. Then $\Theta=\theta^1\wedge \theta^2 \wedge \theta^3$ and $L_{\Theta}^* = \iota(\theta_3)\circ \iota(\theta_2) \circ \iota(\theta_1)$, and a direct computation yields
$$ \br{L_{\Theta}^*}{L_{\Theta}}(\alpha) = \alpha - \sum_{k=1}^3 \theta^k \wedge \iota(\theta_k)(\alpha) + \theta^{12} \wedge \iota(\theta_2)\iota(\theta_1)(\alpha) + \theta^{13} \wedge \iota(\theta_3)\iota(\theta_1)(\alpha) + \theta^{23} \wedge \iota(\theta_3)\iota(\theta_2)(\alpha) $$
where $\theta^{ab} = \theta^a\wedge\theta^b$. If $\alpha \in \Omega^{p,q}(M)$, then by writing $\alpha$ as a sum of simple forms, one can show that $\sum_{k=1}^3 \theta^k \wedge \iota(\theta_k)(\alpha) = p \alpha$ and 
$$ \theta^{12} \wedge \iota(\theta_2)\iota(\theta_1)(\alpha) + \theta^{13} \wedge \iota(\theta_3)\iota(\theta_1)(\alpha) + \theta^{23} \wedge \iota(\theta_3)\iota(\theta_2)(\alpha)= \frac{1}{2}p(p-1)\alpha. $$
Hence, we obtain
$$ \Delta_{L_{\mu \omega}}|_{\Omega^{p,q}(M)}= \frac{9\lambda^2}{4} \left(1-p+ \frac{1}{2}p(p-1)\right)\mathrm{Id}$$
and similarly
$$ \Delta_{L_{\mubar\omega}}|_{\Omega^{p,q}(M)}= \frac{9\lambda^2}{4} \left(1-q+ \frac{1}{2}q(q-1)\right) \mathrm{Id}. $$ Therefore, we deduce that
\begin{align*}
(\Delta_{L_{\mu \omega}} - \Delta_{L_{\mubar \omega}})|_{\Omega^{p,q}(M)} & = \frac{9\lambda^2}{4}\left(1-p+\frac{1}{2}p(p-1) - 1 + q - \frac{1}{2}q(q-1)\right)\mathrm{Id} \\
& = -\frac{9\lambda^2}{8}(3-p-q)(p-q)\mathrm{Id}
\end{align*}
which implies equation~\eqref{eqn:strict-2} due to equation~\eqref{eqn:diff-lapl}. Finally, equation~\eqref{eqn:com-lw-3} together with the above yields
\begin{align*}
\br{\mubar}{\mu}|_{\Omega^{p,q}(M)} & = \frac{i}{9}\br{\Delta_{L_{\mu \omega}}- \Delta_{L_{\mubar \omega}}}{L}|_{\Omega^{p,q}(M)} \\
& = - i \frac{\lambda^2}{8} \big( (3 - (p+1) - (q+1))(p-q) - (3 - p - q)(p-q) \big) \mathrm{Id} \\
& = i \frac{\lambda^2}{4} (p-q)L,
\end{align*}
which is equation~\eqref{eqn:strict-1}.
\end{rmk}

\begin{rmk}
Using the identities that we have established, one can derive a formula for the commutator of $L$ with the Hodge Laplacian $\Delta_d = \br{d^*}{d}$. The graded Jacobi identity gives
$$ \br{L}{\Delta_d} = \br{\br{L}{d^*}}{d} + \br{d^*}{\br{L}{d}}. $$
From Theorem~\ref{thm:nk-id} and equations~\eqref{eqn:br-1}--\eqref{eqn:br-4}, one can deduce that the first term on the right hand side above is $i \delbar^2 - i \del^2$, while using Lemma~\ref{lem:aux-commutators}, the second term is $3 i \del^2 - 3 i \delbar^2 - \br{\mu^*}{L_{\mu \omega}} - \br{\mubar^*}{L_{\mubar \omega}}$. Combining these gives
$$ \br{L}{\Delta_d} = 2 i \del^2 - ( \br{\mu^*}{L_{\mu \omega}} + \br{\mubar^*}{L_{\mubar \omega}} ) - 2 i \delbar^2, $$
decomposed into terms of bidegree $(2,0)$, $(1,1)$, and $(0,2)$ respectively. In particular, if we demand that $\br{L}{\Delta_d} = 0$, then it forces $\delbar^2 = 0$, which is equivalent to integrability of $J$, and we are in the K\"ahler case. The upshot of this observation is that for \emph{strict} nearly K\"ahler manifolds, the Lefschetz decomposition of forms does \emph{not} in general descend to $\Delta_d$-harmonic forms and thus to de Rham cohomology.
\end{rmk}

\section{Hodge decomposition} \label{sec:hodge}

Using the relations between the components of $\br{d^*}{d}$, we derive a formula for $\Delta_d$ in terms of other Laplacians, and obtain a Hodge decomposition in the compact case. This result generalizes~\cite[Theorems 6.1, 6.2]{Verbitsky} to all dimensions and uses the same method of proof. 

We define
\begin{align*}
\mathcal{H}^k(M, \C) & = \{\alpha \in \Omega^k_{\C}(M) : \Delta_d\alpha = 0\}, \\
\mathcal{H}^{p,q}(M) & = \{\beta \in \Omega^{p,q}(M) : \Delta_d\beta = 0\}.
\end{align*}
That is, $\mathcal{H}^{p,q}(M)$ consists of the $(p,q)$-forms which are $\Delta_d$-harmonic. This should not be confused with $(p,q)$-forms which are $\Delta_{\delbar}$-harmonic. These two notions agree on K\"ahler manifolds, but not in general. Note that $\Delta_d$ need not preserve the $(p,q)$-type in general.

\begin{thm}\label{thm:nk-hodge}
Let $(M,g,J)$ be a nearly K\"ahler manifold. Then $\Delta_d = \Delta_{\del-\delbar} + \Delta_{\mu} + \Delta_{\mubar}$. 

Moreover, if $M$ is compact then 
\begin{enumerate}
\item[(a)] $\mathcal{H}^k(M, \C) = \bigcap_{P\in\mathcal{P}}\ker (P : \Omega^k_{\C}(M) \to \Omega^{k+1}_{\C}(M))$ where $\mathcal{P} = \{\mu,\del,\delbar,\mubar,\mu^*, \del^*,\delbar^*,\mubar^*\}$, 
\item[(b)] $\mathcal{H}^k(M, \C) = \ker\Delta_{\mu}\cap\ker\Delta_{\del}\cap\ker\Delta_{\delbar}\cap\ker\Delta_{\mubar}$,
\item[(c)] $\mathcal{H}^k(M,\C)=\bigoplus_{p+q=k}\mathcal{H}^{p,q}(M)$, and
\item[(d)] $\mathcal{H}^{q,p}(M) =\ol{\mathcal{H}^{p,q}(M)}$.
\end{enumerate}
In particular, the odd-degree Betti numbers are all even, just as they are for compact K\"ahler manifolds.
\end{thm}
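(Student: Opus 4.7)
The plan is to first establish the operator identity $\Delta_d = \Delta_{\del-\delbar} + \Delta_\mu + \Delta_{\mubar}$ purely algebraically from Proposition~\ref{prop:com-laplacian}, and then to exploit it together with compactness to derive the four remaining conclusions.

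For the identity, I expand $\Delta_d = \br{d^*}{d}$ using $d = \mu + \del + \delbar + \mubar$, obtaining $\Delta_\mu + \Delta_\del + \Delta_{\delbar} + \Delta_{\mubar}$ together with twelve cross terms $\br{P^*}{Q}$, $P \neq Q$. By Proposition~\ref{prop:com-laplacian}, six of these vanish, and the remaining six are expressible in terms of just $A := \br{\delbar^*}{\del}$ and $B := \br{\del^*}{\delbar}$, summing to $-(A+B)$. Separately, expanding $\Delta_{\del-\delbar} = \br{\del^*-\delbar^*}{\del-\delbar}$ directly gives $\Delta_\del + \Delta_{\delbar} - (A+B)$, and comparing the two expressions yields the claimed identity.

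For (a), compactness lets me pair $\Delta_d \alpha = 0$ with $\alpha$ through the identity: since $\del-\delbar$, $\mu$, and $\mubar$ all have odd degree, this produces a sum of squared $L^2$-norms, immediately forcing $\mu\alpha = \mu^*\alpha = \mubar\alpha = \mubar^*\alpha = 0$ as well as $(\del-\delbar)\alpha = (\del^* - \delbar^*)\alpha = 0$. The crucial additional input is $d\alpha = d^*\alpha = 0$ (from $\Delta_d\alpha = 0$ on a compact manifold); having already killed the $\mu$ and $\mubar$ components, these reduce to $(\del+\delbar)\alpha = 0$ and $(\del^*+\delbar^*)\alpha = 0$, and combining these with the previously obtained difference equations separately isolates $\del\alpha = \delbar\alpha = \del^*\alpha = \delbar^*\alpha = 0$. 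The reverse inclusion is immediate since the vanishing of every $P \in \mathcal{P}$ forces $d\alpha = d^*\alpha = 0$.

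Part (b) follows similarly: on a compact manifold $\ker\Delta_P = \ker P \cap \ker P^*$ for each odd-degree $P$, so the intersection of the four kernels $\ker\Delta_P$ is precisely the set of forms annihilated by all of $\mathcal{P}$, which equals $\mathcal{H}^k(M,\C)$ by (a). For (c), I decompose $\alpha \in \mathcal{H}^k(M,\C)$ by bidegree; since each $P \in \mathcal{P}$ has a definite bidegree, the annihilation $P\alpha = 0$ propagates to each bidegree component $\alpha^{p,q}$, so $\alpha^{p,q} \in \mathcal{H}^{p,q}(M)$ again by (a), and the reverse containment is immediate. Finally (d) reduces to the observation that $\Delta_d$ commutes with complex conjugation of complex-valued forms, so conjugating a $(p,q)$-harmonic form yields a $(q,p)$-harmonic form. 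The main obstacle I anticipate is the delicate step in (a) where $(\del - \delbar)\alpha = 0$ must be combined with $d\alpha = 0$ to extract $\del\alpha = 0$ and $\delbar\alpha = 0$ individually; this is only possible after using the identity to first annihilate the $\mu$ and $\mubar$ contributions to $d\alpha$.
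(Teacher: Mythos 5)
Your proposal is correct and follows essentially the same route as the paper: the identity is obtained by expanding $\br{d^*}{d}$ and cancelling/collecting cross terms via Proposition~\ref{prop:com-laplacian}, part (a) comes from integrating the identity against $\alpha$ and recovering $\del,\delbar,\del^*,\delbar^*$ as linear combinations of $d,\mu,\mubar,\del-\delbar$ and their adjoints, and (b)--(d) follow exactly as in the paper (bidegree purity of each $P\in\mathcal{P}$ for (c), reality of $\Delta_d$ under conjugation for (d)). No gaps.
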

\begin{proof}
Expanding $\Delta_{d}$ and using Proposition~\ref{prop:com-laplacian} we obtain
\begin{align*}
 \Delta_{d} & = \br{d^*}{d}\\ 
 &= \br{\mu^*}{\mu}+ \br{\mu^*}{\del} + \br{\mu^*}{\delbar}+ \br{\mu^*}{\mubar} 
 + \br{\del^*}{\mu}+ \br{\del^*}{\del} + \br{\del^*}{\delbar}+ \br{\del^*}{\mubar} \\
 & \qquad  {} + \br{\delbar^*}{\mu}+ \br{\delbar^*}{\del} + \br{\delbar^*}{\delbar}+ \br{\delbar^*}{\mubar}
 + \br{\mubar^*}{\mu}+ \br{\mubar^*}{\del} + \br{\mubar^*}{\delbar}+ \br{\mubar^*}{\mubar} \\
 & = 
 \br{\mu^*}{\mu} - \br{\del^*}{\delbar} + 0+ 0 
 - \br{\delbar^*}{\del}+ \br{\del^*}{\del} + \br{\del^*}{\delbar}+ 0\\
 & \qquad {} + 0+ \br{\delbar^*}{\del} + \br{\delbar^*}{\delbar}- \br{\del^*}{\delbar}+ 0+ 0 - \br{\delbar^*}{\del}+ \br{\mubar^*}{\mubar} \\
 & = \br{\del^*}{\del} + \br{\delbar^*}{\delbar} - \br{\del^*}{\delbar} - \br{\delbar^*}{\del} + \br{\mu^*}{\mu} +\br{\mubar^*}{\mubar}\\ 
 &= \br{\del^*-\bar\del^*}{\del-\delbar} + \Delta_{\mu} + \Delta_{\mubar}\\
 &= \Delta_{\del-\delbar} + \Delta_{\mu} + \Delta_{\mubar} .
\end{align*}

Now assume that $M$ is compact. We first prove (a). Note that if $\alpha \in \bigcap_{P\in \mathcal{P}}\ker P$, then $d\alpha = 0$ and $d^*\alpha = 0$, so $\Delta_d\alpha = 0$. For the converse direction, suppose $\Delta_d \alpha=0$. Then integration by parts gives
\begin{align*}
0 & = \langle \Delta_d\alpha,\alpha\rangle_{L^2} = \langle \Delta_{\del-\delbar}\alpha,\alpha\rangle_{L^2}
+ \langle \Delta_{\mu}\alpha,\alpha\rangle_{L^2} + \langle \Delta_{\mubar}\alpha,\alpha\rangle_{L^2} \\
& = \|(\del-\delbar)\alpha\|^2 + \|(\del^*-\delbar^*)\alpha\|^2 + \|\mu\alpha\|^2+ \|\mu^* \alpha\|^2+ \|\mubar\alpha\|^2+ \|\mubar^* \alpha\|^2,
\end{align*}
so $\alpha \in \bigcap_{P\in\mathcal{P}'}\ker P$ where $\mathcal{P}' = \{d, \mu,\del-\delbar,\mubar,d^*, \mu^*, \del^*-\delbar^*,\mubar^*\}$. The claim follows since $\del$, $\delbar$, $\del^*$, and $\delbar^*$ are linear combinations of operators in $\mathcal{P}'$. Now (b) follows from (a) and integration by parts.

To prove (c), we need to check that $\mathcal{\alpha}^{p,q}\in \mathcal{H}^{p,q}(M)$ for every $\alpha \in \mathcal{H}^k(M, \C)$.
Given one of the operators $P\in \mathcal{P}$, we have
$\sum_{p,q}P\alpha^{p,q}=0$.
Since $P \Omega^{p,q}(M)\cap P\Omega^{p',q'}(M)=\{0\}$ when $(p,q)\neq(p',q')$, we have $P\alpha^{p,q}=0$ for all $(p,q)$.  Finally, (d) follows from $\ol{P}(\ol{\alpha^{p,q}})=\ol{P(\alpha^{p,q})}$.
\end{proof}

\begin{cor}
Let $(M,g,J)$ be a compact nearly K\"ahler manifold. If $\ker(\Delta_{L_{\mu\omega}} - \Delta_{L_{\mubar\omega}})|_{\Omega^{p,q}(M)} = \{0\}$, then $\mathcal{H}^{p,q}(M) = \{0\}$.
\end{cor}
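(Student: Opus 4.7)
The plan is to combine the characterization of $\Delta_d$-harmonic forms established in Theorem~\ref{thm:nk-hodge}(b) with the proportionality of Laplacian differences in Proposition~\ref{proportional}. Concretely, suppose $\alpha \in \mathcal{H}^{p,q}(M)$. By Theorem~\ref{thm:nk-hodge}(b), $\alpha$ lies in $\ker \Delta_{\del} \cap \ker \Delta_{\delbar}$, and in particular
\[
(\Delta_{\del} - \Delta_{\delbar}) \alpha = 0.
\]

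Next, I would invoke equation~\eqref{eqn:diff-lapl} of Proposition~\ref{proportional}, which identifies
\[
\Delta_{\del} - \Delta_{\delbar} = -\tfrac{2}{9} (\Delta_{L_{\mu\omega}} - \Delta_{L_{\mubar\omega}}).
\]
Applying this to $\alpha$ gives $(\Delta_{L_{\mu\omega}} - \Delta_{L_{\mubar\omega}}) \alpha = 0$. Since $\alpha \in \Omega^{p,q}(M)$, and the hypothesis asserts that $(\Delta_{L_{\mu\omega}} - \Delta_{L_{\mubar\omega}})|_{\Omega^{p,q}(M)}$ has trivial kernel, we conclude $\alpha = 0$.

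There is essentially no obstacle: this is a two-line consequence of the results already established. The only thing to be careful about is that one must use Theorem~\ref{thm:nk-hodge}(b) rather than merely $\Delta_d \alpha = 0$, because a priori $\Delta_d \alpha = 0$ only tells us $(\Delta_{\del - \delbar} + \Delta_\mu + \Delta_{\mubar})\alpha = 0$ via the decomposition in Theorem~\ref{thm:nk-hodge}, and this does not immediately separate $\Delta_{\del}\alpha$ from $\Delta_{\delbar}\alpha$. Compactness enters through Theorem~\ref{thm:nk-hodge}(b), whose proof goes through integration by parts on the closed manifold $M$.
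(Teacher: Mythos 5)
Your proof is correct and follows exactly the same route as the paper: both apply Theorem~\ref{thm:nk-hodge}(b) to place a $\Delta_d$-harmonic $(p,q)$-form in $\ker\Delta_{\del}\cap\ker\Delta_{\delbar}$, then use equation~\eqref{eqn:diff-lapl} to transfer this to the kernel of $\Delta_{L_{\mu\omega}}-\Delta_{L_{\mubar\omega}}$. Your closing remark about why compactness and part (b) (rather than just $\Delta_d\alpha=0$) are needed is an accurate and worthwhile observation.
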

\begin{proof}
Since $M$ is compact, for $\alpha \in \mathcal{H}^{p,q}(M)$, we have $\alpha \in \ker\Delta_{\partial}$ and $\alpha \in \ker\Delta_{\delbar}$ by Theorem~\ref{thm:nk-hodge}(b), so $\alpha \in \ker(\Delta_{\del} - \Delta_{\delbar}) = \ker(\Delta_{L_{\mu\omega}} - \Delta_{L_{\mubar\omega}})$ by equation~\eqref{eqn:diff-lapl}. Therefore, if $\ker(\Delta_{L_{\mu\omega}} - \Delta_{L_{\mubar\omega}})|_{\Omega^{p,q}(M)} = \{0\}$, then $\mathcal{H}^{p,q}(M) = \{0\}$.
\end{proof}

\begin{rmk}
In particular, from Remark~\ref{rem:strict-nk-diff-lapl}, one deduces that for compact \emph{strict} nearly K\"ahler $6$-dimensional manifolds (that is those with $\lambda \neq 0$), we have $\mathcal{H}^{p,q}(M) = \{0\}$, except perhaps if $p=q$ or $p+q=3$. One further finds that $\mathcal{H}^{3,0}(M) = \{0\}$ because Proposition~\ref{prop:tor} implies 
$$ -3\mubar{\Theta} = \br{\Lambda}{L_{\mubar\omega}} \Theta = \Lambda(\mubar \omega\wedge\Theta) = \Lambda(3\lambda\ol{\Theta}\wedge\Theta) = 3\lambda\Lambda(\ol{\Theta}\wedge\Theta) \neq 0 $$
since $\ol{\Theta}\wedge\Theta$ is a multiple of $\omega^3$. In addition, we have $\mathcal{H}^{0,3}(M) = \{0\}$ by Theorem~\ref{thm:nk-hodge}(d).

These results were first proved in~\cite[Theorem 6.2]{Verbitsky}.
\end{rmk}

\section{Summary and discussion} \label{sec:conclusion}

Nearly K\"ahler manifolds in dimension $6$ are distinguished from those in higher dimensions, because in dimension $6$ they induce an $\SU{3}$ structure, while in general a nearly K\"ahler manifold of real dimension $2n$ does \emph{not} induce an $\SU{n}$ structure. Nevertheless, while Verbitsky in~\cite{Verbitsky} exploited this $\SU{3}$ structure extensively to establish his Hodge-theoretic results for nearly K\"ahler $6$-manifolds, we have shown in this paper that most (although not all) of Verbitsky's results still hold true for nearly K\"aher manifolds in any dimension, so they do not require an $\SU{n}$ structure.

In particular, for a compact nearly K\"ahler manifold in any dimension, from Theorem~\ref{thm:nk-hodge} we always have the following: if we define the ``Hodge numbers'' $h^{p,q}$ to be the dimensions of the spaces of the $\Delta_d$-harmonic forms of type $(p,q)$, then $h^{p,q} = h^{q,p}$ and $b^k = \sum_{p+q=k} h^{p,q}$. These are of course well-known in the K\"ahler case, where we can further identify $h^{p,q}$ with the space of $\Delta_{\delbar}$-harmonic forms of type $(p,q)$. Such an identification is not in general possible in the nearly K\"ahler case.

The result of Verbitsky that we were not able to generalize to higher dimensions was the fact that $\Delta_{L_{\mu\omega}} -\Delta_{L_{\mubar\omega}} = c_{p,q} \mathrm{Id}$ on forms of type $(p,q)$, where $c_{p,q}$ is a constant depending on $p,q$. This fact allowed Verbitsky to prove that for nearly K\"ahler $6$-manifolds, $h^{p,q} = 0$ except possibly when $p=q$ or $p+q=3$, although he then further exploited the $\SU{3}$ structure to show that $h^{3,0} = h^{0,3} = 0$ as well. (See our equation~\eqref{eqn:strict-2} and Remark~\ref{rem:strict-nk-diff-lapl}.)

There is a class of nearly K\"ahler manifolds in higher dimensions that shares some properties in common with nearly K\"ahler $6$-manifolds, in that they are equipped with a canonical complex-valued $3$-form $\gamma$ which satisfies a system of equations similar to~\eqref{eq:NK6}. These are nearly K\"ahler $(4n+2)$-manifolds that are twistor spaces over positive scalar curvature quaternionic-K\"ahler manifolds. Such spaces are equipped with a particular type of $\Sp{n} \! \cdot \! \U{1}$ structure. (See~\cite[Sections 4.1--4.2]{AKM}, particularly~\cite[equations following Theorem 4.8]{AKM}.) We note that $\Sp{1} \! \cdot \! \U{1}$ can be identified~\cite[Example 4.5]{AKM} with a subgroup of $\SU{3}$, but when $n > 1$, the analogous construction does not work to identify the group $\Sp{n} \! \cdot \! \U{1}$ with a subgroup of $\SU{2n+1}$.

It is natural to ask if there exists an identity that generalizes $(\Delta_{L_{\mu\omega}} -\Delta_{L_{\mubar\omega}})|_{\Omega^{p,q}} = c_{p,q} \mathrm{Id}$ for such spaces. Note that the real dimension of such spaces is $2(2n+1)$. If such an identity existed, and if $c_{p,q}$ was again of the form $\lambda (p-q) (2n+1-p-q)$ for some $\lambda \in \R \setminus \{0\}$, then we would also be able to deduce that $h^{p,q} = 0$ except possibly when $p=q$ or $p+q=2n+1$. It is not clear if we could then also exclude the ``edge'' cases $(p,q) = (2n+1,0), (0,2n+1)$, since that argument crucially uses the $\SU{3}$ structure in the $6$-dimensional case. Nevertheless, it would at least say that all the odd Betti numbers \emph{except possibly the middle one} must be zero. (Note that these twistor spaces also admit K\"ahler-Einstein metrics, so we know that all the odd Betti numbers must be even, but this would say that they are in fact zero.)

In fact, a stronger result than this is already known to be true. In~\cite[Theorem 6.6]{Salamon} it is shown that any quaternionic-K\"ahler manifold with positive scalar curvature is simply connected and has odd Betti numbers zero. This is equivalent to the odd Betti numbers of its twistor space being zero, because $S^2$ bundles over simply connected bases $B$ have the same Betti numbers as $B \times S^2$ (see~\cite[Corollary 4.41]{FOT}). Salamon's proof uses completely different ideas, including both the Kodaira and the Akizuki--Nakano vanishing theorems, and it also gives the vanishing of the middle Betti number.

This result of Salamon and the above discussion suggest that Hodge-theoretic arguments involving the graded commutators of various natural operators, as introduced by Verbitsky and extended by us in the present paper, may possibly be further extendible to such special $\SU{n} \! \cdot \! \U{1}$ structures in dimension $4n+2$. The authors investigated this situation, and found a complicated formula for $\Delta_{L_{\mu\omega}} - \Delta_{L_{\mubar\omega}}$ on $(p,q)$ forms of ``pure Lefschetz type'' with respect to the induced complex symplectic structure $\kappa = \kappa_2 + i \kappa_3$ and its conjugate $\bar{\kappa}$ on the horizontal distribution of the twistor space~\cite[Section 4.2]{AKM}. However, the authors were unable to exclude certain $(\kappa, \bar\kappa)$-primitive $(2,1)$-forms from this kernel using purely algebraic arguments. Thus, if such an approach is to succeed, some new ideas would be required.

Other interesting questions which arise from our investigations are the following:
\begin{itemize}
\item As mentioned in Section~\ref{sec:intro}, is it possible to prove formality of compact nearly K\"ahler manifolds in all dimensions by using these general nearly K\"ahler identities to establish an analogue of the $\partial \ol{\partial}$-lemma, without resorting to the case-by-case argument in~\cite{AmannTaimanov} that relies on the Nagy classification?
\item Can we generalize the Hodge decomposition to Riemannian manifolds admitting a metric-compatible connection whose torsion is totally skew-symmetric and closed? (Such objects are playing an increasingly important role in physics and in generalized geometry. See for example,~\cite{Agricola, Mario-Jeff}, for more details.)
\end{itemize}

\addcontentsline{toc}{section}{References}

\bibliographystyle{plain}
\bibliography{NK-Hodge.bib}

\end{document}